\numberwithin{equation}{section}
\numberwithin{figure}{section}
\theoremstyle{plain}
\newtheorem{thm}{\protect\theoremname}
\theoremstyle{definition}
\newtheorem{defn}[thm]{\protect\definitionname}
\theoremstyle{remark}
\newtheorem{rem}[thm]{\protect\remarkname}
\theoremstyle{plain}
\newtheorem{lem}[thm]{\protect\lemmaname}
\theoremstyle{plain}
\newtheorem{prop}[thm]{\protect\propositionname}
\theoremstyle{plain}
\newtheorem{cor}[thm]{\protect\corollaryname}
\theoremstyle{plain}
\theoremstyle{plain}
\newtheorem{ex}[thm]{\protect\examplename}
\providecommand{\corollaryname}{Corollary}
\providecommand{\definitionname}{Definition}
\providecommand{\lemmaname}{Lemma}
\providecommand{\propositionname}{Proposition}
\providecommand{\remarkname}{Remark}
\providecommand{\theoremname}{Theorem}
\providecommand{\hypothesisname}{Hypothesis}
\providecommand{\examplename}{Example}
\newcommand{\la}{\langle}
\newcommand{\ra}{\rangle}
\newcommand{\cA}{\mathcal{A}}
\newcommand{\cB}{\mathcal{B}}
\newcommand{\cC}{\mathcal{C}}
\newcommand{\cD}{\mathcal{D}}
\newcommand{\cE}{\mathcal{E}}
\newcommand{\cF}{\mathcal{F}}
\newcommand{\cI}{\mathcal{I}}
\newcommand{\cL}{\mathcal{L}}
\newcommand{\cP}{\mathcal{P}}
\newcommand{\cS}{\mathcal{S}}
\newcommand{\scL}{\mathscr{L}}
\newcommand{\CC}{\mathbb{C}}
\newcommand{\EE}{\mathbb{E}}
\newcommand{\NN}{\mathbb{N}}
\newcommand{\PP}{\mathbb{P}}
\newcommand{\RR}{\mathbb{R}}
\newcommand{\fS}{\mathfrak{S}}
\newcommand{\dd}{\mathop{}\!\mathrm{d}}
\newcommand{\scB}{\mathscr{B}}
\title[Local times associated to Volterra-L\'evy processes and SDEs]{Regularity of Local times associated to Volterra-L\'evy processes and  path-wise regularization of stochastic differential equations}
\author{Fabian A. Harang \and Chengcheng Ling}
\begin{document}

\begin{abstract}
   We investigate the space-time regularity of the local time associated to Volterra-L\'evy processes, including Volterra processes driven by $\alpha$-stable processes for $\alpha\in(0,2]$. We show that the spatial regularity of the local time for Volterra-L\'evy process is $\PP$-a.s.  inverse proportionally to the singularity of the associated Volterra kernel. We apply our results to the investigation of path-wise regularizing effects obtained by perturbation of ODEs by a Volterra-L\'evy process which has sufficiently regular local time. Following along the lines of \cite{HarangPerkowski2020}, we show existence,  uniqueness and differentiablility of the flow associated to such equations.
\end{abstract}

\keywords{Stochastic differential equations, L\'evy process, Volterra process, Regularization by noise, Occupation measure, Local time, Young integral, Stochastic Sewing Lemma}

\thanks{\emph{ AMS 2010 Mathematics Subject Classification: } Primary: 60H10, 35R09; Secondary: 60G51
\\
\emph{Acknowledgments}: F.A. Harang gratefully acknowledges financial support from the STORM project 274410, funded by the Research Council of Norway.
\\ Financial support for C. Ling by the DFG through the CRC 1283 "Taming uncertainty and profiting from randomness and low regularity in analysis, stochastics and their applications" is acknowledged.}

\address{Fabian A. Harang: email: fabianah@math.uio.no,
address:  Department of Mathematics, University of Oslo, P.O. box 1053, Blindern, 0316, OSLO, Norway}

\address{Chengcheng Ling: email: cling@math.uni-bielefeld.de, address:
Faculty of Mathematics, Bielefeld University,
33615, Bielefeld, Germany
}

\maketitle

 \tableofcontents{}

\section{Introduction}
Occupation measures and local times associated to $d$-dimensional paths $(p_t)_{t\in [0,T]}$ have received much attention  over the past decades from both in the  analytical  and  the probabilistic community. The occupation measure essentially quantifies the amount of time the path $p$ spends in a given set, i.e. for a Borel set $A\in \cB(\RR^d)$ the occupation measure is given by
\begin{equation*}
    \mu_t(A)=\lambda\{s\in [0,t]|\, p_s\in A\},
\end{equation*}
where $\lambda$ is the Lebesgue measure on $\mathbb{R}$.
The local time is given as the Radon-Nikodym derivative of the occupation measure with respect to the Lebesgue measure. The existence of the local time is generally not assured without some further knowledge of the path $p$, and the existence of the local time associated to the Weierstrass function, and other deterministic fractal like paths, is, to the best of our knowledge, still considered an open question. However, when $(p_t)_{t\in [0,T]}$ is a stochastic process, existence of the local time can often be proved using probabilistic techniques, and much research has been devoted to this aim, see e.g. \cite{GerHoro} and the references therein for a comprehensive overview.   Knowledge of probabilistic and analytic properties of the local time becomes useful in a variety problems arising in analysis. For example, given a measurable path $p$ with an existing local time, the following formula holds
\begin{equation*}
    \int_0^t b(x-p_s)\dd s=b\ast L_{t}(x),
\end{equation*}
where $\ast$ denotes convolution, and $L:[0,T]\times \RR^d \rightarrow \RR_+$ is the local time associated to $p$. Thus analytical or probabilistic questions relating to the left hand side integral can often be answered with the knowledge of the probabilistic and analytic properties of the local time $L$.
\\

In this article we will study regularity properties of the local time associated to Volterra-L\'evy processes given on the form
\begin{equation}\label{Volterra Levy}
z_t=\int_0^t k(t,s)\dd \scL_s, \qquad t\in [0,T],
\end{equation}
where $k(t,\cdot)\in L^\alpha([0,t])$ for all $t\in [0,T]$ with $\alpha\in(0,2]$, and $\scL$ is a L\'evy process on a filtered probability space $(\Omega,\cF,\PP)$. In the case when $\scL=B$ is a Brownian motion, then joint regularity in time and space of the local time associated to Volterra processes has received some attention in recent years as this knowledge can be applied towards regularization of ODEs by noise \cite{HarangPerkowski2020,galeati2020noiseless,galeati2020prevalence,Catellier2016}, as discussed in detail below.
Furthermore, in \cite{galeati2020prevalence}, the authors investigated the regularity of the local time associated to $\alpha$-stable processes, i.e. when the kernel $k\equiv 1$, and $\scL$ is an $\alpha$-stable process. One goal of this article is therefore to extend these results to the general case of Volterra-L\'evy processes, as well as apply this to the regularization by noise procedure.  Towards this end, we formulate a simple local non-determinism condition for these processes, which will be used to determine the regularity of the local time. The regularity of the local time is then proved in Sobolev space, by application of the recently developed stochastic sewing lemma \cite{le2018}, similarly as done for Gaussian Volterra
 processes in \cite{HarangPerkowski2020}. By  embedding, it follows that the local time is also contained in a wide range of Besov spaces.
 \\

 As an application of our results on regularity of the local time,  we show existence and pathwise uniqueness of SDEs of the form
 \begin{equation}\label{eq: genral intro equation}
      \frac{\dd }{\dd t}x_t = b(x_t)+\frac{\dd}{\dd t}z_t,\qquad x_0=\xi\in \RR^d
 \end{equation}
 even when $b$ is a Besov-distribution (the exact regularity requirement of $z$ and $b$ will be given in Section \eqref{sec. main results}  below).
It is well known that certain stochastic processes provide a regularizing effect on SDEs on the form of \eqref{eq: genral intro equation}. By this we mean that if the process $(z_t)_{t\in [0,T]}$ is given on some explicit form,   \eqref{eq: genral intro equation} might be well posed, even when $b$ does not satisfy the usual assumption of Lipschitz and linear growth.
In fact, in \cite{Catellier2016}, the authors show that if $z$ is given as a sample path of a fractional Brownian motion with Hurst index $H\in (0,1)$,
equation \eqref{eq: genral intro equation} is well posed and have a unique solution even when $b$ is only a distribution in the generalized Besov-H\"older space $\cC^\beta$ with $\beta<\frac{1}{2H}-2$.
More recently, Perkowski and one of the authors of the current article in \cite{HarangPerkowski2020} proved that there exists a certain class of continuous Gaussian processes with exceptional regularization properties. In particular, if $z$ in \eqref{eq: genral intro equation} is given as a path of such a process, then a unique solution exists to \eqref{eq: genral intro equation} (where the equation is understood in the pathwise sense), for any $b\in \cC^\beta$ with $\beta\in \RR$. Moreover, the flow map $\xi\mapsto x_t(\xi)$ is infinitely differentiable. We then say that the path $z$ is infinitely regularizing.  Not long after this result was published, Galeati and Gubinelli \cite{galeati2020noiseless},  showed that in fact {\em almost all continuous paths are infinitely regularizing} by using the concept of prevalence.
Furthermore, the regularity assumption on  $b$ was proven to be  inverse proportional to the irregularity of the continuous process $z$. In fact, this statement holds in a purely deterministic sense, see e.g \cite[Thm. 1]{galeati2020noiseless}.
The main ingredient in this approach to regularization by noise is to formulate the ODE/SDE into a non-linear Young equation, involving a non-linear Young integral, as was first described in \cite{Catellier2016}. This reformulation allows one to construct integrals, even in the case when traditional integrals (Riemann, Lebesgue, etc.) does not make sense. A particular advantage of this theory is furthermore that the framework itself does not rely on any probabilistic properties of the processes, such as Markov or martingale properties. This makes this framework particularly suitable when considering SDEs where the additive stochastic process is of a more exotic type. As is demonstrated in the current paper, the framework is well suited to study SDEs driven by   Volterra-L\'evy processes, which is a class of processes difficult to analyse using traditional probabilistic techniques. We believe that this powerful framework can furthermore be applied towards analysing several interesting problems relating to ill-posed SDEs and ODEs in the future.
\\

Historically, the investigation of similar regularising effects for SDEs with general L\'evy noise seems to have received less attention compared to the case when the SDE \eqref{eq: genral intro equation} is driven by a continuous Gaussian process. Of course, the general structure of the L\'evy noise excludes several techniques which has previously been applied in the Gaussian case.  However, much progress has been made also on this front when the equation has jump type noise, and although several interesting results deserves to be mentioned, we will only discuss here some the most recent results and refer the reader to \cite{Krylov2005,Flandoli20112,Bass2001,Flandoli2017,Zhang2017} for further results.  In \cite{priola2012}, Priola showed that \eqref{eq: genral intro equation} has a path-wise unique strong solution (in a probabilistic sense) when $z=\scL$ is a symmetric $\alpha$-stable process with $\alpha\in (0,2)$ and $b$ is a bounded $\beta$-H\"older continuous function of order $\beta>1-\frac{\alpha}{2}$.
In \cite{priola2018} this result was put in the context of path-by-path uniqueness suggested by Davie \cite{Davie2007}. More recently, in \cite{raynal2020weak} the authors prove that the martingale problem associated to \eqref{eq: genral intro equation} is well posed, even when $b$ is only assumed to be bounded and continuous, in the case when $z=\scL$ is an $\alpha$-stable process with $\alpha=1$ (being the critical case).  Further  in \cite{athreya2020}  the authors show strong existence and uniqueness of \eqref{eq: genral intro equation} when $z=\scL$ is an $1$-dimensional $\alpha$-stable process, and  $b\in \cC^\beta$ with $\beta>\frac{1}{2}-\frac{\alpha}{2}$. Thus, allowing here for possibly distributional coefficients $b$ when $\alpha$ is sufficiently large (i.e. greater than $1$).
Our results can be seen as an extension of the last result to a purely pathwise setting, and  to the case of general Volterra-L\'evy processes. Similarly as seen in the Gaussian case, the choice of Volterra kernel then dictates the regularity $\beta\in \RR$ of the  distribution $b\in \cC^\beta$ that can be considered to still obtain existence and uniqueness.

\subsection{Main results}\label{sec. main results}
We present here the main results to be proven in this article. The first result provides a simple condition to show regularity of the local time associated to Volterra-L\'evy processes.

\begin{thm}\label{thm: first main reg of local time}
Let $(\scL_t)_{t\in [0,T]}$ be a L\'evy process on a filtered probability space $(\Omega,\cF,\PP)$, with characteristic $\psi:\RR^d\rightarrow \CC$, and let $k$ be a real valued and possibly singular Volterra kernel satisfying for $t\in[0,T]$, $k(t,\cdot)\in L^\alpha([0,t])$ with $\alpha\in(0,2]$. Define the Volterra L\'evy process $(z_t)_{t\in [0,T]}$ by $z_t :=\int_0^t k(t,s)\dd \scL_s$,
where the integral is defined in  Definition \ref{VL}. Suppose that the characteristic triplet and the Volterra kernel satisfies for some $\zeta > 0$ and $\alpha\in(0,2]$
\begin{equation*}
    \inf_{t\in [0,T]}\inf_{s\in [0,t]} \inf_{\xi\in \RR^d} \frac{\int_s^t \psi(k(t,r)\xi)\dd r }{(t-s)^\zeta |\xi|^\alpha}>0.
\end{equation*}
If $\zeta\in (0,\frac{\alpha}{d})$, then there exists a $\gamma>\frac{1}{2}$ such that the local time $L:\Omega\times[0,T]\times \RR^d\rightarrow \RR_+$ associated to $z$ is contained in $\cC^\gamma([0,T]; H^\kappa(\RR^d))$ for any $\kappa<\frac{\alpha}{2\zeta}-\frac{d}{2}$, $\PP$-a.s..
\end{thm}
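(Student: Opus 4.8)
The plan is to work entirely on the Fourier side. Writing the local time through its spatial Fourier transform, one has formally $\widehat{L_t}(\xi)=\int_0^t e^{i\la\xi,z_r\ra}\dd r$, so membership of $L$ in $\cC^\gamma([0,T];H^\kappa(\RR^d))$ will follow once I control the increments $\int_s^t e^{i\la\xi,z_r\ra}\dd r$ in $L^m(\Omega)$, uniformly in $\xi$, with a decay in $|\xi|$ that is integrable against the Sobolev weight $(1+|\xi|^2)^\kappa$ and a time factor $|t-s|^{1/2+\epsilon}$. Concretely, I would first establish a bound of the form
\[
\Big\|\int_s^t e^{i\la\xi,z_r\ra}\dd r\Big\|_{L^m(\Omega)}\lesssim |t-s|^{\frac12+\epsilon}\,(1+|\xi|)^{-\frac{\alpha}{2\zeta}+\epsilon'},
\]
then pull the expectation inside the $H^\kappa$-norm by Minkowski's integral inequality, and close the argument with the Kolmogorov continuity theorem for Banach-space valued processes. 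The reason a direct second-moment computation fails is that the Volterra process $z$ does not have independent increments, so the joint law of $(z_r,z_{r'})$ is not explicit; this is exactly the situation the stochastic sewing lemma of \cite{le2018} is designed to handle, as in \cite{HarangPerkowski2020}.

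The one quantity I can compute is the conditional characteristic function. Using the decomposition $z_r=\int_0^s k(r,v)\dd\scL_v+\int_s^r k(r,v)\dd\scL_v$ furnished by Definition \ref{VL}, the second integral is independent of $\cF_s$ while the first is $\cF_s$-measurable, and the L\'evy--Khintchine formula for integrals of deterministic integrands against $\scL$ gives
\[
\EE\big[e^{i\la\xi,z_r\ra}\,\big|\,\cF_s\big]=\exp\!\big(i\la\xi,\textstyle\int_0^s k(r,v)\dd\scL_v\ra\big)\,\exp\!\big(-\int_s^r \psi(k(r,v)\xi)\dd v\big).
\]
Taking moduli (equivalently, using $\mathrm{Re}\,\psi\ge 0$) and invoking the local non-determinism hypothesis, whose triple infimum supplies a constant $c>0$ uniform in $s,t,\xi$, yields the key decay
\[
\big|\EE[e^{i\la\xi,z_r\ra}\mid\cF_s]\big|\le e^{-c\,(r-s)^\zeta|\xi|^\alpha}.
\]
This is the only probabilistic input, and everything downstream is built from it.

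I would then set the germ $A_{s,t}(\xi):=\int_s^t \EE[e^{i\la\xi,z_r\ra}\mid\cF_s]\dd r$ and apply the stochastic sewing lemma to reconstruct $\cA_t(\xi)=\int_0^t e^{i\la\xi,z_r\ra}\dd r$. The first sewing condition is immediate from the tower property: since $\delta A_{s,u,t}(\xi)=\int_u^t(\EE[\cdot\mid\cF_s]-\EE[\cdot\mid\cF_u])e^{i\la\xi,z_r\ra}\dd r$, one has $\EE[\delta A_{s,u,t}(\xi)\mid\cF_s]=0$. For the second condition, the same decomposition exhibits the integrand as a unit-modulus factor times the \emph{centered} variable $\EE[e^{i\la\xi,\int_s^u k(r,v)\dd\scL_v\ra}]-e^{i\la\xi,\int_s^u k(r,v)\dd\scL_v\ra}$, and integrating the decay bound gives $\|\delta A_{s,u,t}(\xi)\|_{L^m}\lesssim |\xi|^{-\alpha/\zeta}\wedge(t-u)$. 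The interpolation $\min(a,b)\le a^{1-\theta}b^{\theta}$ with $\theta=\tfrac12+\epsilon$ converts this into $\lesssim |\xi|^{-\frac{\alpha}{\zeta}(\frac12-\epsilon)}|t-s|^{\frac12+\epsilon}$, precisely the sewing estimate with time exponent strictly above $1/2$. The lemma then controls $\cA_t(\xi)-\cA_s(\xi)-A_{s,t}(\xi)$, and the same interpolation applied to $|A_{s,t}(\xi)|$ gives the displayed increment bound; a routine Riemann-sum argument identifies $\cA$ with the genuine occupation integral.

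Finally, Minkowski's integral inequality gives
\[
\big\|\,\|L_t-L_s\|_{H^\kappa}\big\|_{L^m}^2\lesssim \int_{\RR^d}(1+|\xi|^2)^\kappa\,\Big\|\int_s^t e^{i\la\xi,z_r\ra}\dd r\Big\|_{L^m}^2\dd\xi,
\]
and the increment bound makes this integral converge exactly when $2\kappa-\frac{\alpha}{\zeta}(1-2\epsilon)+d<0$; letting $\epsilon\downarrow0$ this covers every $\kappa<\frac{\alpha}{2\zeta}-\frac{d}{2}$, the hypothesis $\zeta\in(0,\alpha/d)$ being precisely what makes this range non-empty (and what guarantees the local time exists as an $L^2$-density). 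Thus $\EE\|L_t-L_s\|_{H^\kappa}^m\lesssim |t-s|^{m(\frac12+\epsilon)}$, and Kolmogorov's theorem in $H^\kappa(\RR^d)$, applied with $m$ large, yields a modification in $\cC^\gamma([0,T];H^\kappa)$ with $\gamma=\tfrac12+\epsilon-\tfrac1m>\tfrac12$. I expect the main obstacle to be the verification of the second sewing estimate with the sharp joint dependence on $|\xi|$ and $|t-s|$: isolating the centered increment and choosing the interpolation exponent $\theta$ so that the time regularity strictly exceeds $1/2$ while the spatial decay stays at the critical rate $|\xi|^{-\alpha/(2\zeta)}$. The identification of $\cA$ with the occupation integral, and the care needed to define the Volterra integral when $\alpha<1$, are secondary but must also be addressed.
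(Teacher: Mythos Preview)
Your proposal is correct and follows essentially the same route as the paper's proof (Theorem~\ref{thm: regualrity of local times associated to alpha Volterra process}): same germ $A_{s,t}^\xi=\int_s^t\EE[e^{i\langle\xi,z_r\rangle}\mid\cF_s]\dd r$, same use of the tower property for the first stochastic-sewing condition and the LND decay for the second, then Minkowski and Kolmogorov to close. The only differences are cosmetic---where the paper bounds the integrand of $\delta_u A$ pointwise via $e^{-q}\lesssim q^{-\eta}$ before integrating, you integrate first to get $|\xi|^{-\alpha/\zeta}\wedge(t-u)$ and then interpolate---and the identification step you call routine is carried out in the paper using the assumed continuity in probability of $z$, a hypothesis you should make explicit.
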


\begin{cor}\label{cor: test}
There exists a class of Volterra-L\'evy processes $z_t=\int_0^t k(t,s)\dd \scL_s$ such that for each $t\in [0,T]$, its associated local time $L_t$ is a test function. More precisely, we have that  $(t,x)\mapsto L_t(x)\in \cC^\gamma([0,T];\cD(\RR^d))$ $\PP$-a.s. for any $\gamma\in (0,1)$. Here $\cD(\RR^d)$ denotes the space of test functions on $\RR^d$.
\end{cor}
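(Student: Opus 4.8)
The plan is to satisfy the hypotheses of Theorem~\ref{thm: first main reg of local time} with the non-determinism exponent $\zeta$ \emph{arbitrarily small}, which forces the admissible Sobolev exponent $\kappa<\frac{\alpha}{2\zeta}-\frac d2$ to be arbitrarily large; Sobolev embedding then yields $C^\infty$ smoothness in space, an elementary Lipschitz bound yields time regularity up to order $1$, and boundedness of the sample paths yields compact support. The essential point is to exhibit a \emph{single} kernel for which the non-determinism condition holds for \emph{every} $\zeta>0$ at once, so that all conclusions refer to one and the same local time. I would take $\scL$ to be a symmetric $\alpha$-stable process, for which $\psi(\xi)\asymp|\xi|^\alpha$ and hence the non-determinism quotient reduces, up to a constant, to $(t-s)^{-\zeta}\int_s^t|k(t,r)|^\alpha\,\dd r$, and choose the critical, logarithmically corrected kernel
\begin{equation*}
 k(t,s)=(t-s)^{-1/\alpha}\bigl(1+|\log(t-s)|\bigr)^{-2/\alpha},\qquad 0\le s<t\le T.
\end{equation*}
The logarithmic weight renders the otherwise critical singularity summable, so $k(t,\cdot)\in L^\alpha([0,t])$, and a direct computation gives $\int_s^t|k(t,r)|^\alpha\,\dd r\asymp(1+|\log(t-s)|)^{-1}$, which decays slower than any power $(t-s)^\zeta$ as $s\uparrow t$. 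Consequently the quotient above is bounded below by a positive constant for every $\zeta>0$, so Theorem~\ref{thm: first main reg of local time} applies for all $\zeta\in(0,\alpha/d)$.

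Applying the theorem along a sequence $\zeta_n\downarrow0$ then shows that the associated local time $L$ lies in $\cC^{1/2}([0,T];H^\kappa(\RR^d))$ for \emph{every} $\kappa>0$. To raise the time exponent I would combine this with the trivial bound coming from the occupation formula: for $\sigma>d/2$ and $\|\phi\|_{H^\sigma}\le1$ one has $|\la L_t-L_s,\phi\ra|=\bigl|\int_s^t\phi(z_r)\,\dd r\bigr|\le(t-s)\|\phi\|_\infty\lesssim(t-s)$, whence $\|L_t-L_s\|_{H^{-\sigma}}\lesssim(t-s)$. Interpolating this Lipschitz estimate in $H^{-\sigma}$ against $\|L_t-L_s\|_{H^N}\lesssim(t-s)^{1/2}$ gives, for any target $\kappa$ and any $\gamma<1$, the bound $\|L_t-L_s\|_{H^\kappa}\lesssim(t-s)^{\gamma}$ provided $N$ is chosen large enough (which is permitted, since $L\in\cC^{1/2}([0,T];H^N(\RR^d))$ for all $N$). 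As this holds for every $\kappa$ with one and the same $\gamma$, Sobolev embedding $H^\kappa\hookrightarrow C^m$ and intersection over $m$ give $L\in\cC^\gamma([0,T];C^\infty(\RR^d))$ for every $\gamma\in(0,1)$.

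Finally I would establish compact support: for each realization the path $s\mapsto z_s$ is almost surely bounded on $[0,T]$ (to be checked for the singular kernel, e.g. by a Kolmogorov-type or c\`adl\`ag argument), so the occupation measure $\mu_t$, and hence $L_t=\dd\mu_t/\dd x$, is supported in the fixed compact set $\overline{\{z_s:s\in[0,T]\}}$ for all $t\in[0,T]$ simultaneously. Together with the smoothness and the common compact support $K$, this places $t\mapsto L_t$ in $\cC^\gamma([0,T];\cD_K(\RR^d))\subset\cC^\gamma([0,T];\cD(\RR^d))$ for every $\gamma\in(0,1)$, as claimed. I expect the main obstacle to be the kernel-level verification that the non-determinism holds for \emph{all} $\zeta>0$ --- this is precisely what the logarithmic correction is designed to secure, and is what allows a single process to be simultaneously $C^\infty$ in space and arbitrarily H\"older in time --- with a secondary technical point being the almost sure boundedness of the sample paths underlying the compactness of the support.
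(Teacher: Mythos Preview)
Your proposal is correct and follows the same route as the paper: the paper also takes $\scL$ to be a standard $\alpha$-stable process and chooses the logarithmically corrected critical kernel $k(t)=t^{-1/\alpha}\bigl(\ln\tfrac{1}{t}\bigr)^{-p}$ (your exponent $2/\alpha$ is a particular choice of $p>1/\alpha$), verifies that this yields $(\alpha,\zeta)$-LND for \emph{every} $\zeta>0$, and then appeals to boundedness (via c\`adl\`ag paths) for compact support.

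The one point of difference is how the time exponent is pushed from $\gamma>\tfrac12$ to arbitrary $\gamma<1$. The paper simply asserts ``for any $\gamma\in(0,1)$'' in Example~\ref{exVL}\,(iv); this is justified by the \emph{proof} of Theorem~\ref{thm: regualrity of local times associated to alpha Volterra process}, where the actual H\"older exponent obtained is $\gamma=1-\zeta\eta$ (with $\eta$ controlling the spatial gain), so that letting $\zeta\downarrow0$ drives $\gamma\uparrow1$ while keeping $\kappa$ fixed. Your interpolation argument---combining the trivial pathwise Lipschitz bound $\|L_{s,t}\|_{H^{-\sigma}}\lesssim t-s$ with $\|L_{s,t}\|_{H^N}\lesssim(t-s)^{1/2}$ for arbitrarily large $N$---is a clean alternative that uses only the \emph{statement} of Theorem~\ref{thm: first main reg of local time} and avoids reopening its proof. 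Both arguments are valid; yours is arguably more self-contained given what is formally stated.
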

See Example \ref{exVL}, {\rm (iv)} for proof of this corollary.
\\

Inspired by \cite{HarangPerkowski2020,galeati2020prevalence,Catellier2016}
 we apply the result on regularity of the local time to prove regularization of SDEs by Volterra-L\'evy noise. Since we will allow the coefficient $b$ in \eqref{eq: genral intro equation} to be distributional-valued, it is not \emph{a priori} clear what we mean by a solution.  Indeed, since the integral $\int_0^t b(x_s)\dd s$ is not well defined in a Riemann or Lebesgue sense if $b$ is truly distributional, it is not a priori clear how to make sense of \eqref{eq: genral intro equation}. We therefore begin with the following definition of a solution, which is in line with  the definition of pathwise solutions to SDEs used in \cite{HarangPerkowski2020,galeati2020prevalence,Catellier2016}.

\begin{defn}\label{def: concept of solution}
Consider a Volterra-L\'evy process $z$ given as in \eqref{Volterra Levy} with measurable paths, and  associated local time $L$.  Let $b\in \mathcal{S}'(\mathbb{R}^d)$ be a distribution such that $b\ast L\in \cC^\gamma([0,T];\cC^2(\RR^d))$ for some $\gamma>\frac{1}{2}$. Then for any $\xi\in \RR^d$ we say that $x$ is a solution to
\begin{equation*}
    x_t=\xi+\int_0^t b(x_s)\dd s+z_t,\qquad \forall t\in [0,T],
\end{equation*}
if and only if $x-z\in \cC^\gamma([0,T];\RR^d)$, and there exists a $\theta\in \cC^\gamma([0,T];\RR^d)$ such that $\theta=x-z$, and $\theta $ solves the non-linear Young equation
\begin{equation*}
    \theta_t=\xi+\int_0^t b\ast \bar{L}_{\dd r} (\theta_r), \qquad \forall t\in [0,T].
\end{equation*}
Here $\bar{L}_t(z)=L_t(-z)$ where $L$ is the local time associated to $(z_t)_{t\in [0,T]}$, and the integral is interpreted in the non-linear Young sense, described in Lemma \ref{lem: non linear young integral}.
\end{defn}

\begin{thm}\label{thm: main existence and uniqueness}
Suppose $(z_t)_{t\in [0,T]}$ is a Volterra-L\'evy process such that its associated local time $L\in \cC^\gamma([0,T]; H^\kappa)$ for some $\kappa>0$ and $\gamma>\frac{1}{2}$, $\PP$-a.s.. Then for any  $b\in H^\beta(\RR^d)$ with $\beta>2-\kappa$, there exists a unique pathwise solution to the equation
\begin{equation*}\label{eq:SDE intro}
    x_t=\xi+\int_0^t b(x_s)\dd s+z_t,\qquad \forall t\in [0,T],
\end{equation*}
where the solution is interpreted in sense of Definition  \ref{def: concept of solution}. Moreover, if $\beta>n+1-\kappa$ for some $n\in \NN$, then the flow mapping $\xi\mapsto x_t(\xi)$ is $n$-times continuously differentiable.
\end{thm}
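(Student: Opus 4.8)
The plan is to use the reformulation provided by Definition \ref{def: concept of solution}, which reduces the SDE to the well-posedness and flow-differentiability of the non-linear Young equation $\theta_t=\xi+\int_0^t b\ast\bar L_{\dd r}(\theta_r)$, with $x=\theta+z$. The first task is to verify that the driving field $A:=b\ast\bar L$ belongs to $\cC^\gamma([0,T];\cC^2(\RR^d))$, which both validates the hypothesis of Definition \ref{def: concept of solution} and makes the integral of Lemma \ref{lem: non linear young integral} available. I would prove this by a Fourier-analytic convolution estimate. Since $\widehat{b\ast\bar L_t}=\hat b\,\widehat{\bar L_t}$, for any $s\le\beta+\kappa$ one has
\begin{equation*}
\int_{\RR^d}(1+|\xi|)^{s}\,|\hat b(\xi)|\,|\widehat{\bar L_t}(\xi)|\dd\xi \;\le\; \|b\|_{H^\beta}\,\|\bar L_t\|_{H^\kappa},
\end{equation*}
obtained by factoring the weight as $(1+|\xi|)^\beta\,(1+|\xi|)^\kappa\,(1+|\xi|)^{s-\beta-\kappa}$, applying Cauchy--Schwarz to the first two factors (the $H^\beta$ and $H^\kappa$ densities) and bounding the third in $L^\infty$ by $1$; note in particular that \emph{no} loss of $d/2$ occurs, since the product of two $L^2$ factors is already $L^1$. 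As reflection preserves the $H^\kappa$-norm and the strict inequality $\beta>2-\kappa$ forces $\beta+\kappa>2$, the left-hand side is finite for some $s>2$, so the standard embedding of this Fourier-Lebesgue space into $\cC^2(\RR^d)$ gives $A_t\in\cC^2$ uniformly in $t$. Applying the identical bound to the increments $\bar L_t-\bar L_s$ transfers the temporal Hölder regularity $\gamma>\tfrac12$ of $L$ to $A$.

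With $A\in\cC^\gamma([0,T];\cC^2)$ and $\gamma>\tfrac12$ in hand, I would establish existence and uniqueness by a contraction argument on $\cC^\gamma([0,T_0];\RR^d)$ for $T_0$ sufficiently small. Defining $\Gamma(\theta)_t=\xi+\int_0^t A_{\dd r}(\theta_r)$, the bounds for the non-linear Young integral in Lemma \ref{lem: non linear young integral} control $[\Gamma(\theta)]_{\cC^\gamma}$ in terms of $\|A\|_{\cC^\gamma\cC^1}$ and $[\theta]_{\cC^\gamma}$, showing that $\Gamma$ maps a suitable ball into itself, while the $\cC^2$-regularity of $A$ yields a Lipschitz estimate for $\Gamma$ on $\cC^\gamma([0,T_0])$ whose constant is a positive power of $T_0$, giving the contraction once $T_0$ is small. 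Crucially, this threshold depends only on $\|A\|_{\cC^\gamma\cC^2}$ and not on the initial datum $\xi$, so the local solution can be concatenated over a finite partition of $[0,T]$ to produce a unique global $\theta$; setting $x=\theta+z$ then yields the unique pathwise solution in the sense of Definition \ref{def: concept of solution}.

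For the flow differentiability, observe first that the convolution estimate of the first step upgrades to $A\in\cC^\gamma([0,T];\cC^{n+1}(\RR^d))$ as soon as $\beta>n+1-\kappa$. The plan is then to differentiate the Young equation in $\xi$: formally $J_t:=\partial_\xi\theta_t(\xi)$ should solve the linear Young equation $J_t=\mathrm{Id}+\int_0^t DA_{\dd r}(\theta_r)\,J_r$, whose coefficient $DA\in\cC^\gamma\cC^{n}$ is (for $n\ge1$) regular enough for Lemma \ref{lem: non linear young integral} to apply; the higher derivatives satisfy analogous linear equations with coefficients assembled from $D^jA$, $j\le n$, and solving these inductively produces candidate derivatives of the flow. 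The main obstacle, and the step demanding the most care, is to rigorously justify that these linear Young equations genuinely deliver the derivatives of $\xi\mapsto x_t(\xi)$, i.e.\ to control the difference quotients and pass to the limit. Following \cite{HarangPerkowski2020}, I would handle this by establishing Lipschitz/Hölder continuity of the flow and of the Young integral in their arguments, showing that the remainders in the first-order Taylor expansion vanish at the correct rate in the increment of $\xi$, and then bootstrapping through the induction on $n$; continuity of the $n$-th derivative finally follows from the continuous dependence of the solutions of the linear Young equations on their data.
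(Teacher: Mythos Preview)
Your proposal is correct and follows essentially the same route as the paper: the Fourier-side Cauchy--Schwarz estimate you give is exactly the content of Young's convolution inequality $H^\beta * H^\kappa \hookrightarrow \cC^{\beta+\kappa}$ invoked in Proposition~\ref{prop: regularity of conv}, the contraction argument is the proof of Lemma~\ref{lem: abstract young equations}, and the inductive linear-Young-equation scheme for the flow derivatives is precisely the program from \cite{HarangPerkowski2020} that the paper's final proposition carries out.
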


\subsection{Structure of the paper}
In section \ref{sec:occupation measures} we recall some basic aspects from the theory of occupation measures, local times, and Sobolev/Besov distribution spaces. Section \ref{sec:volterr levy process} introduces a class of Volterra processes where the driving noise is given as a L\'evy process. We show a construction of such processes, even in the case of singular Volterra kernels, and discuss conditions under which the process is continuous in probability. Several examples of Volterra-L\'evy processes are given, including a rough fractional $\alpha$-stable process, with $\alpha\in [1,2)$. In section \ref{sec: regualrity of local times} we provide some sufficient conditions for the characteristics of Volterra-L\'evy processes such that their associated local time exists, and is $\PP$-a.s. contained in a H\"older-Sobolev space of positive regularity. At last, we apply the concept of local times in order to prove regularization by noise for SDEs with additive Volterra-L\'evy processes. Here, we apply the framework of non-linear Young equations and integration, and thus our results can truly be seen as pathwise, in the "rough path" sense.
An appendix is included in the end, where statements and proofs of some auxiliary results are given.

\subsection{Notation}
For a fixed $T>0$, we will denote by $x_t$ the evaluation of a function at time $t\in [0,T]$, and write $x_{s,t}=x_t-x_s$. For some $n\in \NN$, we define
\begin{equation*}
    \Delta_T^n:=\{(s_1,\ldots,s_n)\in [0,T]^n|\, s_1\leq \dots \leq s_n\}.
\end{equation*}
To avoid confusion, the letter $\scL$ will be used to denote a L\'evy process, while $L$ will be used to denote the local time of a process.
For $\gamma\in (0,1)$ and a Banach space $E$, the space $\cC^\gamma_TE:=\cC^\gamma([0,T];E)$ is defined to be the space of functions $f:[0,T]\rightarrow E$ which is H\"older continuous of order $\gamma$. The space is equipped with the standard semi-norm
\begin{equation*}
    \|f\|_{\gamma}:=\sup_{s\neq t\in [0,T]}\frac{\|f_t-f_s\|_E}{|t-s|^\gamma},
\end{equation*}
and note that under the mapping $f\mapsto |f_0|+\|f\|_\gamma$ the space $\cC^\gamma_TE$ is a Banach space.

We let  $\cS(\RR^d)$ denote the Schwartz space of rapidly decreasing functions on $\RR^d$, and $\cS'(\RR^d)$ its dual space.
 Given $f\in\mathcal S(\mathbb{R}^d)$,
let $\mathscr{F} f$  be the Fourier transform of $f$ defined by
$$
\mathscr{F} f(\xi):=(2\pi)^{-d/2}\int_{\mathbb{R}^d}e^{-i\la\xi, x\ra} f(x)\dd x.
$$
Let $s$ be a real number. The Sobolev space $H^s(\mathbb{R}^d)$ consists of distributions $f\in \cS'(\RR^d)$ such that $\mathscr{F} f\in L_{loc}^2(\mathbb{R}^d)$ and
$$\Vert f\Vert_{H^s}^2:=\int_{\mathbb{R}^d}(1+|\xi|^2)^s|\mathscr{F} f(\xi)|^2\dd \xi<\infty.$$

For $\alpha>0$, if $\int_0^T|f(s)|^\alpha\dd s<\infty$, then we say $f\in L^\alpha([0,T])$.

\section{Occupation measures and local times, and distributions}\label{sec:occupation measures}
This section is devoted to give some background on the theory of occupation measures and local times, as well as definitions of Sobolev and  Besov spaces, which will play a central role throughout this article.

\subsection{Occupation measure and local times}
The occupation measure associated to a process $(x_t)_{t\in [0,T]}$ gives information about the amount of time the process  spends in a given set. Formally, we define the occupation measure $\mu$ associated to $(x_t)_{t\in [0,T]}$ evaluated at $t\in [0,T]$  by
\begin{equation*}
\mu_t(A)=\lambda\{s\leq t|x_s\in A\},
\end{equation*}
where $\lambda$ denotes the Lebesgue measure. The Local time $L$  associated to $x$ is then the Radon-Nikodym derivative with of $\mu$ with respect to the Lebesgue measure(as long as this exists). We therefore give the following definition.

\begin{defn}
Consider a process $x:[0,T]\rightarrow \RR^d$ be a process, and let $\mu$ denote the occupation measure of $x$. If there exists a function $L:[0,T]\times \RR^d \rightarrow \RR_+ $ such that
\begin{equation*}
\mu_t(A)=\int_A L_t(z)\dd z,\qquad {\rm for} \qquad A\in \cB(\RR^d),
\end{equation*}
then we say that $L$ is the local time associated to the process $(x_t)_{t\in [0,T]}$.
\end{defn}

\begin{rem}

The interpretation of the local time $L_t(z)$ is {the time spent by the process $x:[0,T]\rightarrow \RR^d$ at a given point $z\in \RR^d$ }. Thus, the study of this object has received much attention from people investigating both probabilistic and path-wise properties of stochastic processes. For purely deterministic processes $(x_t)_{t\in [0,T]}$, the local time might still exist, however, as discussed in \cite{HarangPerkowski2020}, if $x$ is a Lipschitz path, there exists at least two discontinuities of the mapping $z\mapsto L_t(z)$. On the other hand, it is well known (see \cite{GerHoro}) that the local time associated to the trajectory of a one dimensional Brownian motion is $\frac{1}{2}$-H\"older regular in its spatial variable ($a.s.$). More generally, for the trajectory of a fractional Brownian motion with Hurst index $H\in (0,1)$, we know that its local time $L$ is contained in $H^\kappa$ ($a.s.$) for $\kappa<\frac{1}{2H}-\frac{d}{2}$, while still preserving H\"older regularity in time. This clearly shows that the more irregular the trajectory of the fractional Brownian motion is, the more regularity we obtain in the local time associated to this trajectory. In this case, the regularity of the local time can therefore be seen as an irregularity condition. This heuristic has recently been formalized in \cite{galeati2020prevalence}. There,  the authors show that if the local time associated to a continuous path $(x_t)_{t\in [0,T]}$ is regular (i.e. H\"older continuous or better) in space, then $x$ is \emph{truly rough}, in the sense of \cite{Friz2014}.
More recently, the authors of \cite{HarangPerkowski2020} showed that the local time associated to trajectories of certain particularly irregular Gaussian processes (for example the log-Brownian motion) is infinitely differentiable in space, and {almost} Lipschitz in time.  In the current article, we will extend this analysis to L\'evy processes.
\end{rem}

The next proposition will be particularly interesting towards  applications in differential equations, and which we will use in subsequent sections.

\begin{prop}[{\rm Local time formula}] Let $b$ be a measurable function, and suppose $(x_t)_{t\in [0,T]}$ is a process with associated local time $L$. Then the following formula holds for any $\xi \in \RR^d$ and  $(s,t)\in \Delta^2_T$
\begin{equation*}
\int _s^t b(\xi+x_r)\dd r=b\ast \bar{L}_{s,t}(\xi),
\end{equation*}
where $\bar{L}_t(z)=L_t(-z)$ and  $L_{s,t}=L_t-L_s$ denotes the increment.
\end{prop}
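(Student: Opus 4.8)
The plan is to prove the local time formula by reducing it to the defining property of the local time via the standard "layer-cake" or change-of-variables argument, i.e. rewriting the Lebesgue-in-time integral as an integral against the occupation measure and then using the density $L$. First I would observe that the claimed identity is linear in $b$ and, by a standard approximation argument (monotone class / density of simple functions in the measurable functions), it suffices to establish it for indicator functions $b = \indic_A$ with $A \in \cB(\RR^d)$; the general measurable case then follows by taking linear combinations and monotone limits, invoking the non-negativity of $L$ and $\mu$ so that Tonelli applies without integrability worries.

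For the indicator case, I would unwind both sides explicitly. On the left, for $b=\indic_A$ and fixed $\xi$,
\begin{equation*}
    \int_s^t \indic_A(\xi + x_r)\dd r = \lambda\{r\in[s,t] \mid \xi + x_r \in A\} = \lambda\{r\in[s,t] \mid x_r \in A-\xi\},
\end{equation*}
which by the definition of the occupation measure is exactly the increment $\mu_{s,t}(A-\xi) = \mu_t(A-\xi)-\mu_s(A-\xi)$. On the right, using the definition of the local time as the density of $\mu$ and writing the convolution against the reflected local time $\bar L(z)=L(-z)$,
\begin{equation*}
    \indic_A \ast \bar L_{s,t}(\xi) = \int_{\RR^d} \indic_A(y)\, \bar L_{s,t}(\xi - y)\dd y = \int_A L_{s,t}(y-\xi)\dd y.
\end{equation*}
A change of variables $z = y-\xi$ turns this into $\int_{A-\xi} L_{s,t}(z)\dd z$, which equals $\mu_{s,t}(A-\xi)$ precisely because $L_{s,t}$ is the Radon–Nikodym derivative of the increment $\mu_{s,t}$ (this uses that $\mu_t = \int_{\cdot} L_t$ for each $t$, so subtracting gives $\mu_{s,t}=\int_\cdot L_{s,t}$). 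Thus both sides equal $\mu_{s,t}(A-\xi)$ and agree.

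I would then assemble the general statement: extending from indicators to finite linear combinations is immediate by linearity, extending to arbitrary non-negative measurable $b$ is done by monotone convergence applied to an increasing sequence of simple functions, and finally a general measurable $b$ is split into positive and negative parts. The only genuine care needed — and the step I expect to be the mildest obstacle — is the bookkeeping around the reflection $\bar L$ and the sign in the convolution argument, making sure the change of variables lands on $A-\xi$ rather than $\xi-A$; this is purely a matter of being careful with the direction of the shift, and the reflection $\bar L_t(z)=L_t(-z)$ in the statement is exactly what compensates for it. No quantitative estimates or regularity of $L$ are required here, since the identity is an almost-everywhere/measure-theoretic statement that holds as soon as the local time exists as a density.
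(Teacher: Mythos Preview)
Your argument is correct and is exactly the standard route: reduce to indicators via the monotone class theorem, identify both sides with the shifted occupation measure $\mu_{s,t}(A-\xi)$, and check that the reflection $\bar L$ makes the convolution sign work out. The paper itself does not actually give a proof of this proposition; it merely states that the formula ``follows directly from the definition of the local time'' and refers to \cite[Thm.~6.4]{GerHoro}. So your write-up is strictly more detailed than what appears in the paper, but it is the same idea (use the density property of $L$ with respect to $\mu$) spelled out in full.
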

A proof of this statement follows directly from the definition of the local time, see \cite[Thm. 6.4]{GerHoro} for further details.

\begin{rem}
It is readily seen that, formally, the local time can be expressed in the following way for $\xi\in \RR^d$ and $(s,t)\in \Delta^2_T$
\begin{equation*}
L_{s,t}(\xi)=\int_s^t \delta(\xi - X_r)\dd r,
\end{equation*}
where $\delta$ is the Dirac distribution.
\end{rem}
\begin{rem}
For future reference, we also recall here that the Dirac distribution $\delta$ is contained in the in-homogeneous Sobolev space $H^{-\frac{d}{2}-\epsilon}$ for any $\epsilon>0$ (See e.g. \cite[Remark 1.54]{Bahouri2011}).
\end{rem}

\subsection{Besov spaces and distributions}
Before introducing the notion of Besov spaces, we give a definition of the Paley-Littlewood blocks, which plays a central role in the construction of these spaces.
\begin{defn}[Paley-Littlewood blocks]
 For  $j\in \NN$,  $\rho_j:=\rho(2^{-j}\cdot)$ where $\rho$ is a smooth function supported on an annulus $\cA:=\{x\in\mathbb{R}^d:\frac{4}{3}\leq |x|\leq \frac{8}{3}\}$ and $\rho_{-1}$ is a smooth function supported on the ball $B_{\frac{4}{3}}$. Then $\{\rho_j\}_{j\geq -1}$ is a partition of unity (\cite{Bahouri2011}).
For $j\geq -1$  and some $f\in \cS^\prime$ we define the Paley-Littlewood blocks $\Delta_j$ in the following way
\begin{equation*}
    \Delta_j f=\mathscr{F}^{-1}(\rho_j\mathscr{F}{f}).
\end{equation*}

\end{defn}

\begin{defn}
For $\alpha\in \mathbb{R}$  and $p,q\in[1,\infty]$, the in-homogeneous  Besov space $\cB^\alpha_{p,q}$  is defined by
\begin{equation*}
    \cB_{p,q}^\alpha =\Big\{f\in \mathcal{S}^\prime \Big|\,\,\|f\|_{B_{p,q}^\alpha}:= \left(\sum _{j\geq-1} 2^{jq\alpha}\|\Delta_j f\|_{L^p(\mathbb{R}^d)}^q\right)^{\frac{1}{q}} <\infty \Big\}.
\end{equation*}
We will typically write $\cC^\alpha:=\cB_{\infty,\infty}^\alpha$. Besides, by the definition of the partition of unity and Fourier-Plancherel formula (\cite[Examples p99]{Bahouri2011}), the Besov space $\cB^\alpha_{2,2}$ coincides with Sobolev space $H^\alpha$.
\end{defn}

\begin{rem}
We will work with regularity of the local time in the Sobolev space $H^\kappa$. However, towards applications to regularization by noise in SDEs, we will also encounter Besov spaces, through Young's convolution inequality. We therefore give a definition of these spaces here. Of course, through Besov embedding, $H^\kappa \hookrightarrow B^{\kappa-(\frac{d}{2}-\frac{d}{p})}_{p,q}$ for any $p,q\in [2,\infty]$ and $\kappa \in \RR$, (e.g. \cite[Prop. 2.20]{Bahouri2011}), and thus our results implies that the local time is also included in these Besov spaces. We will however not specifically work in this setting to avoid extra confusion, but refer the reader to \cite{galeati2020noiseless,galeati2020prevalence}  for a good overview of regularity of the local time associated to Gaussian processes in such spaces.
\end{rem}

\section{Volterra-L\'evy process}\label{sec:volterr levy process}

In this section we give a brief introduction on L\'evy processes and stochastic integral for a Volterra kernel with respect to a  L\'evy process. General references for this part are \cite[Chp. 4]{Sato1990} and \cite[Chp. 2, Chp. 4]{Applebaum2004}. In Section 3.1 we give the definition of Volterra-L\'evy    processes (with possibly singular kernels) and obtain the associated characteristic function. Particularly, our framework include Volterra processes driven by symmetric $\alpha$-stable noise.  In the end, we provide several examples of  Volterra-L\'evy processes, including  the \emph{fractional $\alpha$-stable process}.

We begin to provide a definition of L\'evy processes, as well as a short discussion on a few important properties.

\begin{defn}[L\'evy process]\label{alphastable}
Let $T>0$ be fixed.  We say that a c\`adl\`ag and ($\mathcal{F}_t$)-adapted stochastic process $(\scL_t)_{t\in [0,T]}$ defined on a complete probability space $(\Omega,\mathcal{F},(\mathcal{F}_t)_{t\in [0,T]},\PP)$, and which satisfies the usual assumptions is a {\em L\'evy process} if the following properties hold:
\begin{itemize}[leftmargin=.3in]
    \item[{\rm (i)}] $\scL_0=0$ ($\PP$-a.s.).
    \item[{\rm (ii)}] $\scL$ has independent and stationary increments.
    \item[{\rm (iii)}] $\scL$ is  continuous in probability, i.e. for all $\epsilon>0$, and all $s>0$,
    $$\lim_{t\rightarrow s}\PP(|\scL_t-\scL_s|>\epsilon)=0.$$
\end{itemize}
Furthermore, let $\nu$ be a $\sigma$-finite measure on $\mathbb{R}^d$. We say that it is a \emph{L\'evy measure} if
\begin{align*}
  \nu(\{0\})=0,\quad \int_{\mathbb{R}^d}(1\wedge|x|^2)\nu(\dd x)<\infty.
\end{align*}
\end{defn}

\begin{rem}
 A known description of L\'evy process is L\'evy-Khintchine formula: for a $d$-dimensional L\'evy process $\scL$, the characteristic function $\psi$ of $\scL$ verifies that for  $t\geq0$, there exists a vector $a\in\mathbb{R}^d$,  a positive definite symmetric $d\times d$ matrix $\sigma$ and   a L\'evy measure $\nu$
such that the characteristic function is given by  $\EE[e^{i\la\xi,\scL_t\ra}]=e^{-t\psi(\xi)}$ with
\begin{align}\label{LK}
   \psi(\xi)=-i\la a,\xi\ra+\frac{1}{2}\la \xi,\sigma\xi\ra-\int_{\mathbb{R}^d-\{0\}}(e^{i\la\xi,x\ra}-1-i\la\xi,x\ra1_{|x|\leq1}(x))\nu(\dd x).
\end{align}
Here  the triple $(a,\sigma,\nu)$ is called  the \emph{characteristic} of the random variable $\scL_1$.
\end{rem}

The typical examples for L\'evy processes is the case when the L\'evy triplet is given by $(0,\sigma,0)$, resulting in a Brownian motion. Another typical example is when the characteristic triplet is given by $(0,0,\nu)$ and the L\'evy measure $\nu$ defines an $\alpha$-stable process. We provide the following definition for this class of processes.
\begin{defn}[Standard $\alpha$-stable process]
If a $d$-dimensional L\'evy process $(\scL_t)_{t\geq0}$
has the following characteristic function
\begin{align*}
    \psi(\xi)=c_\alpha|\xi|^\alpha,\quad \xi\in\mathbb{R}^d
\end{align*}
with $\alpha\in(0,2]$ and some positive constant $c_\alpha$, then we say $(\scL_t)_{t\geq0}$
is a  standard $\alpha$-stable process.
\end{defn}

We now move on to the construction of Volterra-L\'evy processes, given of the form
\begin{equation}\label{eq:volt processes}
    z_t=\int_0^t k(t,s)\dd \scL_s,\qquad t\in [0,T].
\end{equation}
Of course in the case when $(\scL_t)_{t\in [0,T]}$ is a Gaussian process, or even a square integrable martingale, the construction of such a stochastic integral is by now standard, and  $z$ is constructed as an element in  $L^2(\Omega)$ given that $k(t,\cdot)\in L^2([0,t])$ for all $t\in [0,T]$, see e.g. \cite{Protter2004}.  However, in the case when $\scL$ is not square integrable, then the construction of $z$ as a stochastic integral is not as straight forward. However, several articles discuss also this construction in the case of $\alpha$-stable processes, which would be sufficient for our purpose. The next remark gives only a brief overview on this construction, and we therefore  ask the interested reader to consult the given references for further details on the construction.

\begin{rem}\label{stableint}
Consider  a symmetric $\alpha$-stable process $\scL$ with $\alpha\in(0,2)$. From \cite[Ex. 25.10, p162]{Sato1990} we know that $\EE[|\scL_t|^p]=Ct^{p/\alpha}$ for any $-1<p<\alpha$ and $t\in[0,T]$, and thus the process is not square integrable and the standard "It\^o type" construction of the Volterra process in \ref{eq:volt processes} can not be applied.
However,  in \cite[Chp. 3.2-3.12]{ST1994} the authors propose several different ways of constructing integral  $\int_0^tk(t,s)\dd\scL_s$ given that $k(t,\cdot)\in L^\alpha([0,t])$. In particular, in   \cite[Chp. 3.6]{ST1994} it is shown that the Volterra-stable process below is well-defined and exists in $L^p(\Omega)$ for any $p<\alpha$, given that the kernel $k(t,\cdot)\in L^\alpha([0,t])$ for all $t\in [0,T]$. In fact, in the case when $\scL$ is a symmetric $\alpha$-stable process, it is known that for any $0<p<\alpha$
\begin{equation*}
\left(\EE\left[\left|\int_0^t k(t,s)\dd\scL_s\right|^p\right] \right)^{\frac{1}{p}}\simeq_{p,\alpha,d} \left(\int_0^t |k(t,s)|^\alpha\dd s\right)^{\frac{1}{\alpha}},
\end{equation*}
where $\simeq_{p,\alpha,d}$ means that they differ up to a constant depending on $p,\alpha$ and $d$ (recall that $d$ is the dimension of $\scL$).  See e.g. \cite{ROSINSKI1986} and the references therein for more details on this relation and the  construction of such integrals.
\end{rem}

The above discussion yields the following definition of the Volterra-L\'evy process.

\begin{defn}[Volterra-L\'evy process]\label{VL}
Fix $T>0$, and let $(\scL_t)_{t\in [0,T]}$ be a L\'evy process as given in Definition  \ref{alphastable}. For a given kernel $k:\Delta_T^2\rightarrow\mathbb{R}$ with the property that for any $t\in [0,T]$, $k(t,\cdot)\in L^\beta([0,t])$ with $\beta\in(0,2]$, define
$$
z_t=\int_0^tk(t,s)\dd \scL_s,\quad t\geq0
$$
where the integral is constructed in  $ L^p(\Omega)$ sense for $p\leq \beta$, as discussed above.  Then we call the stochastic process $(z_t)_{t\in [0,T]}$  a {\em  Volterra-L\'evy process}, where $\scL$ is the associated L\'evy process to $z$ and $k$ is called the Volterra kernel.
\end{defn}

\begin{prop}\label{integral}
Let $(\scL_t)_{t\in [0,T]}$ be a L\'evy process on a probability space $(\Omega,\cF,\PP)$, such that $\EE[|\scL_t|^p]<\infty$ for all $0<p<\beta$ where $\beta\in (0,2]$. If  $k(t,\cdot)\in L^{\beta}([0,t])$  for any $t\in [0,T]$, then the Volterra \-L\'evy process  $(z_t)_{t\in [0,T]}$ given by
\begin{equation*}
    z_t=\int_0^t k(t,s)\dd \scL_s
\end{equation*}
is well defined as an element of $L^{p}(\Omega)$ for any $0<p<\beta$.  For $0\leq s\leq t\leq T$, the characteristic function of $z$ is given by
\begin{align}\label{chfz}
\EE[\exp(i \la \xi, z_t\ra)]=\exp \left(-\int_0^t\psi(k(t,s)\xi ) \dd s\right),
\end{align}
and the conditional characteristic function is given by
\begin{align}\label{chfcz}
\EE[\exp(i \la \xi, z_t\ra)|\cF_s]=\cE_{0,s,t}(\xi)\exp \left(-\int_s^t\psi(k(t,r)\xi )\dd r\right),
\end{align}
where $\cE_{0,s,t}(\xi):=\exp\left(i\la \xi, \int_0^sk(t,r)\dd \scL_r \ra \right)$.
\end{prop}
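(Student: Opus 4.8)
The plan is to settle the $L^p$-well-definedness and then obtain both characteristic functions by approximating the kernel $k(t,\cdot)$ in $L^\beta([0,t])$ by simple functions, using the independent and stationary increments of $\scL$ together with the L\'evy--Khintchine formula \eqref{LK}. For the well-definedness I would rely on the construction recalled in Remark \ref{stableint}. Since simple functions are dense in $L^\beta([0,t])$, and since for any simple integrand the integral against $\scL$ is a finite linear combination of increments lying in $L^p(\Omega)$ for every $p<\beta$ (by the moment hypothesis $\EE[|\scL_t|^p]<\infty$), the moment estimate from \cite{ST1994,ROSINSKI1986} shows that $k(t,\cdot)\mapsto\int_0^t k(t,s)\dd\scL_s$ extends continuously from $L^\beta([0,t])$ to $L^p(\Omega)$. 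Hence $z_t$ exists in $L^p(\Omega)$ as the limit of the integrals of approximating simple functions.

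For \eqref{chfz}, choose simple $k_n(t,\cdot)=\sum_j c_j^{(n)}\indic_{(r_j,r_{j+1}]}\to k(t,\cdot)$ in $L^\beta([0,t])$. Then $\int_0^t k_n(t,s)\dd\scL_s=\sum_j c_j^{(n)}\scL_{r_j,r_{j+1}}$, and by independence and stationarity of the increments,
\begin{equation*}
\EE\Big[\exp\Big(i\la\xi,\textstyle\int_0^t k_n(t,s)\dd\scL_s\ra\Big)\Big]=\prod_j\EE\big[e^{i\la c_j^{(n)}\xi,\,\scL_{r_j,r_{j+1}}\ra}\big]=\exp\Big(-\sum_j(r_{j+1}-r_j)\psi(c_j^{(n)}\xi)\Big),
\end{equation*}
where the exponent is exactly $-\int_0^t\psi(k_n(t,s)\xi)\dd s$ since $s\mapsto\psi(k_n(t,s)\xi)$ is the associated step function. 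Letting $n\to\infty$, the left-hand side converges because $\int_0^t k_n(t,s)\dd\scL_s\to z_t$ in $L^p(\Omega)$ and hence in distribution, so the characteristic functions converge pointwise in $\xi$; on the right one must check that $\int_0^t\psi(k_n(t,s)\xi)\dd s\to\int_0^t\psi(k(t,s)\xi)\dd s$, which follows from continuity of $\psi$ and a growth control matching the $L^\beta$-convergence of $k_n(t,\cdot)$.

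The conditional formula \eqref{chfcz} follows by splitting $z_t=\int_0^s k(t,r)\dd\scL_r+\int_s^t k(t,r)\dd\scL_r$. The first summand is $\cF_s$-measurable, so $\exp(i\la\xi,\int_0^s k(t,r)\dd\scL_r\ra)=\cE_{0,s,t}(\xi)$ pulls out of $\EE[\,\cdot\,|\cF_s]$; the second summand is assembled from increments of $\scL$ after time $s$, which are independent of $\cF_s$, so its conditional characteristic function equals its unconditional one, namely $\exp(-\int_s^t\psi(k(t,r)\xi)\dd r)$ by the same step-function computation applied on $[s,t]$. Multiplying the two factors yields \eqref{chfcz}.

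I expect the main obstacle to be the passage to the limit, where one needs both convergences to hold simultaneously: convergence of the stochastic integrals (the probabilistic side) and of the deterministic exponents $\int_0^t\psi(k_n(t,s)\xi)\dd s$ (the analytic side). The latter is delicate for a general L\'evy characteristic when $\beta<2$, because $\psi$ is only controlled a priori by $1+|\eta|^2$ rather than by $|\eta|^\beta$; in the $\alpha$-stable case this is transparent, since $\psi(k(t,s)\xi)=c_\alpha|k(t,s)|^\alpha|\xi|^\alpha$ and both convergences reduce to the $L^\alpha$-convergence $k_n(t,\cdot)\to k(t,\cdot)$, which is exactly the standing assumption.
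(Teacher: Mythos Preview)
Your proposal is correct and follows essentially the same route as the paper: approximate $k(t,\cdot)$ by step functions, exploit the independence and stationarity of the increments of $\scL$ together with the L\'evy--Khintchine formula to compute the characteristic function at the approximate level, and then pass to the limit; the conditional formula is obtained by the identical splitting $z_t=\int_0^s+\int_s^t$ and the measurability/independence argument you give. The only cosmetic difference is that the paper swaps limit and expectation via dominated convergence on the left-hand side, whereas you argue via $L^p$-convergence $\Rightarrow$ convergence in distribution $\Rightarrow$ pointwise convergence of characteristic functions; your version is in fact a bit more explicit about the two convergences that must be checked, and your caveat about the growth of $\psi$ for general L\'evy symbols when $\beta<2$ is a fair observation that the paper does not dwell on either.
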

\begin{proof}
The fact that $z_t\in L^p(\Omega)$ for any $0<p<\beta$ follows from Remark \ref{stableint}. However, the statement is even stronger in the case when $\scL$ is a square integrable martingale, as in this case it is well known that if $k(t,\cdot)\in L^2([0,t])$ for any $t\in [0,T]$ then $z_t\in L^2(\Omega)$ for any $t\in [0,T]$.
By application of a standard  \emph{dominated convergence theorem} argument, it is readily checked that the characteristic function satisfies the following relations
\begin{align*}
\EE[\exp(i \la \xi, z_t\ra)]&= \EE\Big[\lim_{n\rightarrow\infty}\exp\Big(\sum_{j=1}^n i \la k(t,s_j)\xi,(\scL_{s_{j+1}}-\scL_{s_j})\ra\Big)\Big]\\
&=\lim_{n\rightarrow\infty} \prod_{j=0}^{n}\EE\Big[\exp\Big(i \la k(t,s_j)\xi,(\scL_{s_{j+1}}-\scL_{s_j})\ra\Big)\Big]\\
&=\lim_{n\rightarrow\infty} \exp \Big(-\sum_{j=0}^{n}(s_{j+1}-s_j)\psi(k(t,s_j)\xi)\Big)
\\
&=\exp \left(-\int_0^t\psi(k(t,s)\xi ) \dd s\right).
\end{align*}
It follows that \eqref{chfz} holds. For \eqref{chfcz}, since $\int_s^tk(t,r)\dd\scL_r$ is independent of $\cF_s$ and $\int_0^sk(t,r)\dd \scL_r$ is adapted to  $\cF_s$ for any $s\in[0,t]$, we similarly have
\begin{align*}
\EE[\exp(i \la \xi, z_t\ra)|\cF_s]&=\EE[\exp(i \la \xi, \int_s^tk(t,r)\dd\scL_r+\int_0^sk(t,r)\dd\scL_r\ra)|\cF_s]
\\&=\cE_{0,s,t}(\xi)\exp \left(-\int_s^t\psi(k(t,r)\xi )\dd r\right).
\end{align*}
\end{proof}

Everything we have  introduced so far only relates to the probabilistic properties of Volterra-L\'evy process without any details regarding its  sample path behavior. Towards the goal of proving regularity of the local time associated to $(z_t)_{t\in [0,T]}$, as done in  Section \ref{sec: regualrity of local times}, we require that the process $z$ is continuous in probability.  We therefore provide here a simple sufficient condition on the kernel $k$ that assures this property of the process.

\begin{lem}\label{lem:cont in prob}
Let $(z_t)_{t\in [0,T]}$ be a Volterra L\'evy process, as given in Definition \ref{VL}. Then $z$ is continuous in probability if there exists a $p>0$ such that
\begin{equation*}
    \EE[|z_t-z_s|^p]\rightarrow 0 \quad {\rm when} \quad s\rightarrow t.
\end{equation*}
\end{lem}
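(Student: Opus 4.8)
The plan is to read off continuity in probability directly from the assumed $L^p$-convergence by means of Markov's inequality; no machinery beyond the definition of continuity in probability (cf. Definition \ref{alphastable}, item (iii), applied to $z$ in place of $\scL$) is required.

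First I would fix $s\in[0,T]$ and $\epsilon>0$, the goal being to show that $\lim_{t\to s}\PP(|z_t-z_s|>\epsilon)=0$. Since $p>0$ and the map $x\mapsto x^p$ is strictly increasing on $[0,\infty)$, the event $\{|z_t-z_s|>\epsilon\}$ coincides with $\{|z_t-z_s|^p>\epsilon^p\}$. Applying Markov's inequality to the nonnegative random variable $|z_t-z_s|^p$ (which has finite expectation, since the hypothesis $\EE[|z_t-z_s|^p]\to 0$ presupposes this; finiteness itself is guaranteed by Proposition \ref{integral}, which places $z_t$ in $L^p(\Omega)$ for every $0<p<\beta$) then yields
\begin{equation*}
    \PP(|z_t-z_s|>\epsilon)=\PP(|z_t-z_s|^p>\epsilon^p)\leq \frac{\EE[|z_t-z_s|^p]}{\epsilon^p}.
\end{equation*}
By hypothesis the numerator tends to $0$ as $t\to s$, while $\epsilon^p$ is a fixed positive constant, so the right-hand side converges to $0$. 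As $\epsilon>0$ and $s$ were arbitrary, this is precisely the statement that $z$ is continuous in probability.

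There is no substantial obstacle here; the sole point deserving mild care is that $p$ may be smaller than $1$, in which case one cannot directly invoke the form of Markov's/Chebyshev's inequality usually stated for $p\geq 1$. This is handled by the monotonicity observation above, which renders the inequality valid for every $p>0$. I would also note explicitly that the hypothesis is stated only as an \emph{existence} statement (some $p>0$ works), and a single such $p$ suffices for the argument, so no uniformity in $p$ is needed.
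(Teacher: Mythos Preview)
Your proof is correct and essentially identical to the paper's own argument: the paper also applies Markov's inequality directly to obtain $\PP(|z_t-z_s|>\epsilon)\leq \epsilon^{-p}\,\EE[|z_t-z_s|^p]$ and concludes. Your additional remarks on the case $p<1$ and on finiteness of moments are more careful than the paper's one-line justification, but the route is the same.
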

\begin{proof}
 It is readily checked that for any $p>0$ and any $\epsilon>0$
 \begin{equation*}
     \PP(|z_t-z_s|>\epsilon)\leq \frac{1}{\epsilon^p} \EE[|z_t-z_s|^p],
 \end{equation*}
 and thus if $\EE[|z_t-z_s|^p]\rightarrow 0$ as  $s\rightarrow t$, continuity in probability holds.
\end{proof}

Below we provide three examples of different types of Volterra processes driven by L\'evy noise.

\begin{ex}[Brownian motion]
 Let $\beta=2$, $k(t,\cdot)\in L^2([0,t])$ for $t\in[0,T]$. Suppose $\scL$ is a Brownian motion with values in $\mathbb{R}^d$. Then it is well known  that $z_t=\int_0^tk(t,s)\dd \scL_s$ is well-defined in $L^2(\Omega)$ as a Wiener integral.
 The sample paths of such processes are clearly measurable, and   depending on the regularity of the kernel $k$, the process may also be (H\"older) continuous.
\end{ex}

\begin{ex}[Square-integrable martingale case]
 Let $\beta=2$,  $k(t,\cdot)\in L^2([0,t])$ for $t\in[0,T]$ and $\scL$ be a $(\mathcal{F}_t)$-martingale satisfying $\EE[|\scL_t|^2]<\infty$, for all $t\in[0,T]$. Then we know  $z_t=\int_0^tk(t,s)\dd \scL_s,t\geq0$ is well-defined according to Proposition \ref{integral} (this is also clear from classical martingale theory, e.g. \cite{Applebaum2004}).

\end{ex}

\begin{ex}[Standard $\alpha$-stable case]\label{ex:standard alpha stable}
  For $\alpha\in(0,2)$, {$\beta>-\frac{1}{\alpha}$}, such that $|k(t,s)|\simeq |t-s|^\beta$ and  $k(t,\cdot)\in L^\alpha([0,T])$, let $\scL$ be a standard $\alpha$-stable process defined in Definition \ref{alphastable}.  By Proposition \ref{integral}, we know that $z_t=\int_0^tk(t,s)\scL_s,t\geq0$ is well defined for $\alpha\in(0,2)$ (see also  \cite[Section 3.6 Examples]{ST1994}).
Furthermore, $(z_t)_{t\in [0,T]}$ is continuous in probability. Indeed, note that $z_t-z_s=\int_s^t k(t,r)\dd\scL_r+\int_0^s k(t,r)-k(s,r)\dd\scL_r$. Let us consider the first integral in this decomposition, as the second follows similarly.  By Remark \ref{stableint} it follows that there exists a $p>0$ such that
\begin{equation}
    \EE[|\int_s^t  k(t,r)\dd \cL_s|^p]\simeq_{p,\alpha,d} \left(\int_s^t k(t,r)^\alpha \dd r\right)^{\frac{p}{\alpha}}.
\end{equation}
Since $|k(t,r)|\simeq |t-r|^\beta$, then
\begin{equation}
   H(s,t):= \int_s^t k(t,r)^\alpha\dd r\simeq \int_s^{t} |t-r|^{\beta\alpha} \dd r=(\beta\alpha+1)^{-1}|t-s|^{\beta\alpha+1}.
\end{equation}
Since $\alpha>0$ and $\beta> -1/\alpha$, it follows that $H(s,t)\rightarrow 0$ as $s\rightarrow t$. Treating similarly the term $\int_0^s k(t,r)-k(s,r)\dd \scL_r$, and using that
\begin{equation}
\EE[|z_t-z_s|^p]\lesssim_p \EE[|\int_s^t k(t,r)\dd\scL_r|^p]+\EE[|\int_0^s k(t,r)-k(s,r)\dd\scL_r|^p],
\end{equation}
we conclude by Lemma \ref{lem:cont in prob} that $(z_t)_{t\in [0,T]}$ is continuous in probability.
\end{ex}

With the above preparation at hand, we can then construct fractional $\alpha$-stable processes and give a representation of its characteristic function. We summarize this in the following example.

\begin{ex}[\rm{Fractional $\alpha$-stable process}]\label{fr}
Let  $\scL$ be an $\alpha$-stable process with $ \alpha\in(0,2]$, and consider the Volterra kernel  $k(t,s)=(t-s)^{H-\frac{1}{\alpha}}$,
 $H\in(0,1)$. Then the process $z_t=\int_0^tk(t,s)\dd\scL_s$   is called a \emph{fractional $\alpha$-stable process} (of Riemann-Liouville type) and specifically if $\alpha=2$, then $\scL$ is a Brownian motion and $z$ is a  \emph{fractional Brownian motion}.  Note that in this case $k(t,\cdot)\in L^\alpha([0,t],\dd s)$ for any $H\in (0,1)$. There is a more detailed study of fractional processes of this type in \cite[Chapter 7]{ST1994}.
An application of Proposition  \eqref{chfz} yields that the characteristic function associated to the fractional $\alpha$-stable process $z$, is given by
$$\EE[\exp(i \la \xi, z_t\ra)]=\exp \left(-c_\alpha \frac{|\xi|^\alpha t^{H\alpha}}{H\alpha} \right).$$
Note also that by the same argument as used in Example \ref{ex:standard alpha stable}, it is readily checked that the fractional $\alpha$-stable process $(z_t)_{t\in [0,T]}$ is continuous in probability.
\end{ex}

\section{Regularity of the  local time associated to Volterra-L\'evy processes}\label{sec: regualrity of local times}
This section is devoted to prove space-time regularity of the local time associated to Volterra-L\'evy processes, as defined in Section \ref{sec:volterr levy process}.
 We begin to give a notion of local non-determinism for these processes, and provide a few examples of specific processes which satisfy this property.

\subsection{Local non-determinism condition for Volterra-L\'evy process}

The following definition of a local non-determinism condition can be seen as an extension of the concept of strong local non-determinism used in the context of Gaussian processes, see e.g. \cite{Xiao2006,galeati2020prevalence,HarangPerkowski2020}.

\begin{defn}\label{def: zeta alpha Volterra kernel}
Let $\scL$ be a L\'evy process with characteristic $\psi:\RR^d\rightarrow \CC$ as given in \eqref{LK}, and
let $z$ be a   Volterra-L\'evy process (Definition \ref{VL}) with  L\'evy process $\scL$ and  Volterra kernel $k:\Delta^2_T\rightarrow \RR$  satisfying $k(t,\cdot)\in L^\alpha([0,t])$ for all $t\in [0,T]$.  If for some  $\zeta>0$ and   $\alpha\in(0,2]$ the following inequality holds
\begin{equation}\label{LND}
\lim_{t\downarrow 0} \inf_{s\in(0,t]} \inf_{\xi\in \RR^d} \frac{\int_s^t \psi(k(t,r)\xi) \dd r}{(t-s)^\zeta|\xi|^\alpha}>0.
\end{equation}
Then we say that  $z$ is  $(\alpha,\zeta)$-Locally non-deterministic ($(\alpha,\zeta)$-LND).
\end{defn}

 \begin{rem}
  The elementary example of a Volterra kernel is $k(t,s):=1_{[0,t]}(s)$ for any $0\leq s\leq t\leq T$. In this case the  Volterra-L\'evy process is just given as the L\'evy process itself, i.e.  $z_t =\scL_t$.  If we let $\scL$ be a standard $d$-dimensional $\alpha$-stable process, condition \eqref{LND} fulfills for $\zeta=1$. Hence  a standard $d$-dimensional $\alpha$-stable process is $(\alpha,1)$-LND, which coincides with the conclusion in \cite[Proposition 4.5, Example (a)]{Nolan89}.
 \end{rem}

\begin{rem}

There already exists several concepts of local non-determinism, but, as far as we know, most of them are given in terms of a condition on the variance of certain stochastic processes. The only exception we are aware of is the definition of Nolan in \cite{Nolan89} for $\alpha$-stable processes, where a similar condition is stated in $L^p$ spaces, with $p=\alpha$ (see \cite[Definition 3.3]{Nolan89}).  Of course working with general $\alpha$-stable processes, we do in general not have finite variance, and thus the standard definitions of such a concept is not applicable. On the other hand, in the case when $\alpha=2$, we have finite variance, and then the above criterion would be very similar to the condition for strong local non-determinism for Gaussian Volterra processes, as discussed for example in \cite{Xiao2006}. Working with  the conditional characteristic function of  Volterra $\alpha$-stable  processes, we see however that this condition in some sense is what needs to be replaced in order to prove existence and regularity of local times associated to these processes.

\end{rem}

 It is readily seen that the Volterra $\alpha$-stable process satisfies \eqref{LND}, with $\zeta$  depending on the choice of kernel $k$. The condition is however some what more general, as we only require the processes to behave similarly to  Volterra  $\alpha$-stable processes. Let us provide an example to discuss some interesting process that satisfies the LND condition.

\begin{ex}[Volterra kernel] \label{VK} As two examples of Volterra kernel that we are interested most, we give a specific discussion here. The first one usually relates to fractional type processes, for instance, fractional Brownian motion and fractional stable processes. As we will see later,  the second one makes the  corresponding Volterra-L\'evy process  an infinitely regularising process, similarly to the Gaussian counterpart discussed in \cite{HarangPerkowski2020}.
\begin{itemize}[leftmargin=.3in]
    \item[\rm (i)]For $\alpha\in(0,2]$, $H\in(0,1)$, let $k(t,s)=F(t,s)(t-s)^{H-\frac{\alpha}{2}}$, where $F:\Delta_T^2\rightarrow \RR\setminus{0}$ is continuous and $F(t,s)\simeq 1$ when $|t-s|\rightarrow 0$, where $\simeq$ means that the two sides are comparable up to a positive constant. It can be easily checked that $k(t,\cdot)\in L^\alpha([0,t])$ for $t\in[0,T]$.
    \item[\rm (ii)]  Let $p>\frac{1}{\alpha}$, and consider the kernel
    $k(t):=t^{-\frac{1}{\alpha}}(\ln \frac{1}{t})^{- p}$ for $t\in [0,1)$.
    It is readily seen that $k(t,\cdot)\in L^\alpha([0,t])$ for any $t<1$.
\end{itemize}

\end{ex}

\begin{ex}[Gaussian case: $\alpha=2$]
\label{Gaussian}
 Let $\scL=B$ be a Brownian motion. Then the Gaussian Volterra process $z_t=\int_0^tk(t,s)\dd B_s, t\in[0,T]$ is $(2,\zeta)$-LND according to definition \ref{def: zeta alpha Volterra kernel} if
 \begin{align*}
    \lim_{t\downarrow 0} \inf_{s\in(0,t]} \frac{\int_s^t |k(t,r)|^2 \dd r}{(t-s)^\zeta}>0.
\end{align*}

\end{ex}
As we mentioned, the L\'evy process $\scL$ does not have to be Gaussian type processes. For non-Gaussian type $\scL$ we mostly consider $\alpha$-stable processes or the processes which has similar behavior to stable processes. Since the condition \eqref{LND} only focuses on the characteristic function $\psi$ of $\scL$, there is a large class of jump processes which can be studied here.
\begin{ex}[Stable type processes]\label{Non-Gaussian}
Fix an $\alpha\in(0,2)$.  Given a kernel $k:\Delta^2_T\rightarrow \RR$  with $k(t,\cdot)\in L^\alpha([0,t])$ and satisfying for some  $\zeta>0$ the following inequality
\begin{align}
    \label{k}
    \lim_{t\downarrow 0} \inf_{s\in(0,t]} \frac{\int_s^t |k(t,r)|^\alpha \dd r}{(t-s)^\zeta}>0.
\end{align}
Then the following list of processes satisfy the LND condition in Definition \ref{def: zeta alpha Volterra kernel}:
\begin{itemize}[leftmargin=.3in]
    \item[\rm(i)]  $\scL$ is a standard $d$-dimensional $\alpha$-stable process, i.e., $$\psi(\xi)=c_\alpha|\xi|^\alpha,\quad c_\alpha>0.$$ Then
obviously $z_t =\int_0^t k(t,r)\dd\scL_r, t>0$ is $(\alpha,\zeta)$-LND.\\
Besides, here if $k(t,s)=k(t-s)$ for $0\leq s\leq t<\infty$ and $\int_0^t|k(t,s)|^\alpha \dd s>0$, according to Definition \ref{def: zeta alpha Volterra kernel}, the process $z$ is $(\alpha,1)$-LND, which coincides the conclusion  in \cite[Proposition 4.5]{Nolan89}.
\item[\rm(ii)] $\scL=(\scL_1,\cdots,\scL_d)$, where $\scL_1,\cdots,\scL_d$ are independent $1$-dimensional standard $\alpha$-stable processes.
In this case the corresponding characteristic function $\psi$ is given by
$$
\psi(\xi)=c_\alpha(|\xi_1|^\alpha+\cdots+|\xi_d|^\alpha),\quad c_\alpha>0.
$$
By Jensen's inequality, it follows that $|\xi_1|^\alpha+...+|\xi_d|^\alpha
=|\xi_1|^{2\cdot \frac{\alpha}{2}}+...+|\xi_d|^{2\cdot \frac{\alpha}{2}}
\geq (|\xi_1|^2+...+|\xi_d|^2)^{\frac{\alpha}{2}}=|\xi|^\alpha$ for $\alpha\in(0,2]$, which  implies  $\psi(\xi)\geq c_\alpha|\xi|^\alpha$. By \eqref{k} we conclude that $z_t =\int_0^t k(t,r)\dd\scL_r, t>0$, is $(\alpha,\zeta)$-LND.
\item[\rm(iii)] $\scL$ is a $d$-dimensional L\'evy process with characteristic function $$\psi(\xi)=|\xi|^{\alpha}{\log(2+|\xi|)},\quad\xi\in \mathbb{R}^d.$$ We additionally assume $\alpha\in(0,1)$ (see \cite{Kang2015} Example 1.5). This processes is not really a stable process but the small size jumps  of this process has similar behavior to stable processes.
Since $|\xi|^{\alpha}{\log(2+|\xi|)}\geq |\xi|^\alpha$ for $\xi\in \mathbb{R}^d$, then
\begin{align*}
    \lim_{t\downarrow 0} \inf_{s\in(0,t]} \inf_{\xi\in \RR^d} \frac{\int_s^t \psi(k(t,r)\xi) \dd r}{(t-s)^\zeta|\xi|^\alpha}&\geq  \lim_{t\downarrow 0} \inf_{s\in(0,t]} \inf_{\xi\in \RR^d} \frac{\int_s^t |k(t,r)|^{{\alpha}}\dd r}{(t-s)^\zeta}>0.
\end{align*}
Therefore $z_t =\int_0^t k(t,r)\dd\scL_r, t>0$, is $(\alpha,\zeta)$-LND.
\end{itemize}
\end{ex}

The following theorem shows the regularity of the local time associated to  Volterra-L\'evy  processes which is $(\alpha,\zeta)$-LND according to Definition \ref{def: zeta alpha Volterra kernel}.

\subsection{Regularity of the local time}
With the concept of local non-determinism at hand, we are now ready to prove the regularity of the local time associated to Volterra L\'evy processes, and thus also proving Theorem \ref{thm: first main reg of local time}.
The following theorem provides a proof of Theorem \ref{thm: first main reg of local time}, as well as giving $\PP$-a.s. bounds for the Fourier transform of the occupation measure and the local time.

\begin{thm}[{\rm Regularity of Local time}]\label{thm: regualrity of local times associated to alpha Volterra process}
 Let  $z:\Omega\times [0,T]\rightarrow \RR^d$ be a L\'evy Volterra process with characteristic $\psi:\RR^d \rightarrow \CC$ on a complete filtered probability space $(\Omega,\cF,\{\cF_t\}_{t\in [0,T]},\PP)$, and suppose $z$ is $(\alpha,\zeta)$-LND for some $\zeta\in(0,\frac{\alpha}{d})$  and $\alpha\in(0,2]$,  continuous in probability, and adapted to the filtration $(\cF_t)_{t\in [0,t]}$.  Then the local time $L:\Omega\times [0,T]\times \RR^d \rightarrow \RR_+$ associated to $z$ exists and is square integrable. Furthermore,  for any $\kappa<\frac{\alpha}{2\zeta}-\frac{d}{2}$ there exists a  $\gamma>\frac{1}{2}$ such that  the local time is contained in the space $\cC^\gamma_T H^\kappa$.
\end{thm}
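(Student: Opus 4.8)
The plan is to work entirely on the Fourier side, where the occupation measure and the local time linearise. Writing $\mu_{s,t}$ for the occupation measure of $z$ on $[s,t]$, its spatial Fourier transform is $\mathscr{F}\mu_{s,t}(\eta)=\int_s^t e^{-i\la \eta, z_r\ra}\dd r$, so that $\mathscr{F} L_{s,t}=\mathscr{F}\mu_{s,t}$ and $\|L_{s,t}\|_{H^\kappa}^2=\int_{\RR^d}(1+|\eta|^2)^\kappa|\mathscr{F}\mu_{s,t}(\eta)|^2\dd\eta$. To establish existence and square-integrability I would invoke the classical spectral criterion (see \cite{GerHoro}): it suffices that $\EE\int_{\RR^d}|\mathscr{F}\mu_{t}(\eta)|^2\dd\eta<\infty$. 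Expanding the square and using Fubini,
\[
\EE|\mathscr{F}\mu_{t}(\eta)|^2=\int_0^t\int_0^t\EE\big[e^{-i\la\eta, z_r-z_{r'}\ra}\big]\dd r\dd r'.
\]
For $r'\le r$ I would condition on $\cF_{r'}$, use that the increment $\int_{r'}^r k(r,w)\dd\scL_w$ is independent of $\cF_{r'}$, and apply the conditional characteristic function \eqref{chfcz} to get $|\EE[e^{-i\la\eta,z_r-z_{r'}\ra}]|\le \exp(-\int_{r'}^r\mathrm{Re}\,\psi(-k(r,w)\eta)\dd w)$; the $(\alpha,\zeta)$-LND condition bounds this by $\exp(-c|r-r'|^\zeta|\eta|^\alpha)$. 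Integrating in $\eta$ gives a finite integral precisely when $\zeta<\alpha/d$, which is the standing hypothesis, and the case $\kappa=0$ yields square-integrability.

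For the joint time--space regularity I would apply the stochastic sewing lemma \cite{le2018} to the germ $A_{s,t}:=\EE[L_{s,t}\mid\cF_s]$, regarded as a random element of $H^\kappa$; this is the efficient way to reach $L^m$ bounds for all $m$, since for L\'evy noise there is no Wick-type formula making a direct high-moment computation tractable. Its Fourier transform is $\mathscr{F} A_{s,t}(\eta)=\int_s^t\EE[e^{-i\la\eta,z_r\ra}\mid\cF_s]\dd r$, and since the prefactor $\cE_{0,s,r}(-\eta)$ in \eqref{chfcz} has modulus one, the conditional characteristic function together with LND gives the \emph{deterministic} bound $|\mathscr{F} A_{s,t}(\eta)|\lesssim\int_s^t\exp(-c(r-s)^\zeta|\eta|^\alpha)\dd r\lesssim\min(|t-s|,|\eta|^{-\alpha/\zeta})$. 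Splitting the $H^\kappa$ integral at the frequency $|\eta|=|t-s|^{-\zeta/\alpha}$, both pieces contribute equally and I obtain $\|A_{s,t}\|_{H^\kappa}\lesssim|t-s|^{\gamma_0}$ with $\gamma_0:=1-\tfrac{\zeta(d+2\kappa)}{2\alpha}$, the high-frequency piece being integrable exactly when $\kappa<\tfrac{\alpha}{2\zeta}-\tfrac{d}{2}$; this same condition forces $\gamma_0>\tfrac12$.

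The decisive structural fact is that the local time is genuinely additive, $L_{s,t}=L_{s,u}+L_{u,t}$, so the sewing defect is $\delta A_{s,u,t}=A_{s,t}-A_{s,u}-A_{u,t}=\EE[L_{u,t}\mid\cF_s]-\EE[L_{u,t}\mid\cF_u]$. By the tower property $\EE[\delta A_{s,u,t}\mid\cF_s]=0$, so the first (conditional) hypothesis of the sewing lemma is automatic, while the second follows by bounding each of the two conditional expectations of $L_{u,t}$ exactly as above, giving $\|\delta A_{s,u,t}\|_{L^m(\Omega;H^\kappa)}\lesssim|t-u|^{\gamma_0}\le|t-s|^{1/2+\epsilon}$ with $\epsilon=\gamma_0-\tfrac12>0$, uniformly in $m$. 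The sewing lemma then yields a limit process whose increments I would identify with $L_{s,t}$ by checking that the martingale-difference sums $\sum_i\big(L_{t_i,t_{i+1}}-\EE[L_{t_i,t_{i+1}}\mid\cF_{t_i}]\big)$ vanish in $L^2(\Omega;H^\kappa)$ as the mesh shrinks: orthogonality reduces this to $\sum_i|t_{i+1}-t_i|^{2\gamma_0}\to0$, which holds since $2\gamma_0>1$. This produces the moment bound $\|L_{s,t}\|_{L^m(\Omega;H^\kappa)}\lesssim|t-s|^{\gamma_0}$ for every finite $m$.

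Finally, the Kolmogorov continuity theorem for $H^\kappa$-valued processes, with $m$ chosen so large that $\gamma_0-\tfrac1m>\tfrac12$, upgrades these moment bounds to $L\in\cC^\gamma_T H^\kappa$ almost surely for some $\gamma>\tfrac12$, which is the assertion. The step I expect to be the main obstacle is converting the LND condition into the uniform exponential decay $|\EE[e^{-i\la\eta,z_r\ra}\mid\cF_s]|\lesssim\exp(-c(r-s)^\zeta|\eta|^\alpha)$ for the $r$-dependent kernel $k(r,\cdot)$: one must handle the complex characteristic $\psi$ through its real part (using $\mathrm{Re}\,\psi\ge0$ and that LND lower-bounds it) and, since Definition \ref{def: zeta alpha Volterra kernel} is phrased as a limit $t\downarrow0$, first promote it to a lower bound that is genuinely uniform over all short subintervals of $[0,T]$, e.g.\ by a covering argument. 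Everything downstream is frequency-space bookkeeping that the sewing lemma keeps under control.
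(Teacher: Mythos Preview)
Your proposal is correct and follows the same core strategy as the paper: apply L\^e's stochastic sewing lemma to the germ $A_{s,t}=\int_s^t\EE[e^{-i\langle\eta,z_r\rangle}\mid\cF_s]\dd r$, extract exponential frequency decay from the $(\alpha,\zeta)$-LND condition via the conditional characteristic function \eqref{chfcz}, and conclude with Kolmogorov. The differences are organisational rather than essential.

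The paper runs the sewing lemma frequency by frequency (so the germ is $\CC$-valued, parametrised by $\xi$) and only afterwards integrates in $\xi$ via Minkowski to obtain the $H^\kappa$ bound; you sew directly in $L^m(\Omega;H^\kappa)$. Both yield the same estimate, though your frequency-splitting computation makes the exponent $\gamma_0=1-\tfrac{\zeta(d+2\kappa)}{2\alpha}$ explicit, whereas the paper hides it behind the inequality $e^{-q}\lesssim q^{-\eta}$. For the identification of the sewing limit with the local time, the paper argues directly from continuity in probability of $z$ (so of $r\mapsto e^{i\langle\xi,z_r\rangle}$), showing $\widehat{\mu_{s,t}}(\xi)=\cA_{s,t}^\xi$ in $L^p$ without any prior bound on $L$; your martingale-difference argument is equally valid but tacitly uses $\|L_{u,v}\|_{L^2(\Omega;H^\kappa)}\lesssim|v-u|^{\gamma_0}$, which your spectral-criterion step does deliver (the same two-point computation gives the weighted integrability for any $\kappa<\tfrac{\alpha}{2\zeta}-\tfrac{d}{2}$, not just $\kappa=0$) --- just make that explicit. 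Your closing caveat about promoting the $t\downarrow0$ formulation of LND to a uniform lower bound on $[0,T]$ and about reading the condition through $\mathrm{Re}\,\psi$ is well taken; the paper simply assumes the uniform version (compare Theorem~\ref{thm: first main reg of local time}) and treats $\psi$ as if real.
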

\begin{proof}
We will follow along the lines of the proof of \cite[Theorem 17]{HarangPerkowski2020}, but adapt to the case of L\'evy processes. To this end, we will apply the stochastic sewing lemma from \cite{le2018}, which is  provided in Lemma \ref{Lem: Stochastic sewing lemma} for self-containedness.

A Fourier transform of the occupation measure $\mu_{s,t}(\dd x)$ yields  $\int_s^t e^{i\la\xi, z_r\ra}\dd r$.  Note that this coincides with the Fourier transform of the local time $L_{s,t}(x)$ whenever $L$ exists.
Our first goal is therefore to show that for any $p\geq 2$, the following inequality holds for some $\lambda\geq 0$ and $\gamma \in (\frac{1}{2},1)$
\begin{equation*}
    \|\widehat{\mu_{s,t}}(\xi)\|_{L^p(\Omega)}\lesssim (1+|\xi|^2)^{-\frac{\lambda}{2}}|t-s|^\gamma.
\end{equation*}
To this end, the stochastic sewing lemma (see Lemma \ref{Lem: Stochastic sewing lemma}) will provide us with this information. We begin to define
$$A_{s,t}^\xi:=\int_s^t \EE[\exp(i\la \xi, z_r\ra)|\cF_s]\dd r,$$
 and for a partition $\cP[s,t]$ of $[s,t]$ define
\begin{equation*}
    \cA_{\cP[s,t]}^\xi:=\sum_{u,v} A_{u,v}^\xi
\end{equation*}
If the integrand $A^\xi$ satisfy the  conditions {\rm (i)-(ii)} in Lemma \ref{Lem: Stochastic sewing lemma}, then a unique limit to $\cA_{s,t}^\xi=\lim_{|\cP|\rightarrow 0}\cA_{\cP}^\xi$ exists in $L^p(\Omega)$.  Note that then $\int_s^t e^{i\la \xi, z_r\ra} \dd r =\cA_{s,t}^\xi$ in $L^p(\Omega)$. We continue to prove that conditions {\rm (i)-(ii)} in Lemma \ref{Lem: Stochastic sewing lemma} is indeed satisfied for our integrand $A$. It is already clear that $A^\xi_{s,s}=0$, and $A^\xi_{s,t}$ is ($\cF_t$)-measurable.
For any point $u\in [s,t]$  we define
$$
\delta_u f_{s,t}:=f_{s,t}-f_{s,u}-f_{u,t}
$$
for any function $f:[0,T]^2 \rightarrow \RR$.
It follows by the tower property  and linearity of conditional expectations that
\begin{multline*}
    \EE[\delta_u A^\xi_{s,t}|\cF_s]=\EE[\int_s^t \EE[\exp(i\la \xi,z_r\ra)|\cF_s]\dd r
    \\
    -\int_s^t \EE[\exp(i\la \xi, z_r\ra)|\cF_s]\dd r-\int_s^u \EE[\exp(i\la \xi, z_r\ra)|\cF_u]\dd r |\cF_s]=0.
\end{multline*}
At last, we will need to control the term $\|\delta_u A_{s,t}^\xi\|_{L^p(\Omega)}$.  To this end, using Proposition \ref{integral}, we know that
\begin{equation*}
    A_{s,t}^\xi =  \int_s^t \cE_{0,s,r}(\xi) \exp\left(- \int_s^r \psi(k(r,l)\xi) \dd l \right) \dd r,
\end{equation*}
where $\cE$ is defined as in \eqref{chfcz}.
Therefore, it is readily checked that
\begin{equation*}
    \delta_{u}A_{s,t}^\xi = \int_u^t \cE_{0,s,r}(\xi) \exp\left(-\int_s^r \psi(k(r,l)\xi) \dd l \right)
    - \cE_{0,u,r}(\xi) \exp\left(-\int_u^r \psi(k(r,l)\xi) \dd l \right) \dd r.
\end{equation*}
Of course, moments of the complex exponential $\cE_{0,s,r}(\xi)$ is bounded by $1$, i.e.  for any $r\in[s,t]$, $\|\cE_{0,s,r}(\xi)\|_{L^p(\Omega)}\leq 1$, and therefore it follows that
\begin{equation*}
    \|\delta_u A_{s,t}^\xi\|_{L^p(\Omega)} \lesssim \int_u^t \exp\left(-\int_s^r \psi(k(r,l)\xi) \dd l \right)+\exp\left(-\int_u^r \psi(k(r,l)\xi) \dd l\right) \dd r.
\end{equation*}
Using the fact that $z$ is $(\alpha,\zeta)$-LND for some $\zeta\in (0,1)$, and using that $(r-s)^\zeta\geq (r-u)^\zeta $ we obtain the estimate
\begin{equation*}
     \|\delta_u A_{s,t}^\xi\|_{L^p(\Omega)}  \lesssim \int_u^t \exp\left(-c |\xi|^\alpha (r-u)^\zeta \right) \dd r.
\end{equation*}
Note in particular that this holds for any $p\geq 2$.
By the property of the exponential function, we have that for any $\eta\in \RR_+$
\begin{equation}\label{eq: exponential bound}
    \exp\left(-c_\alpha |\xi|^\alpha (r-u)^\zeta \right)\leq \exp(T^\zeta) \left(1+|\xi|^\alpha\right)^{-\eta}(r-u)^{-\zeta\eta}\sup_{q\in \RR_+} q^\eta \exp(-q).
\end{equation}

Since $1+|\xi|^\alpha\lesssim (1+|\xi|^2)^\frac{\alpha}{2}$ for all $\alpha\in(0,2]$, applying this relation in
 \eqref{eq: exponential bound}, and assuming that $0\leq \eta\zeta<\frac{1}{2}$ it follows that for all $p\geq 2$ there exists a $\gamma>\frac{1}{2}$ and $\lambda<\frac{\alpha}{2\zeta}$
\begin{equation*}
     \|\delta_u A_{s,t}^\xi\|_{L^p(\Omega)} \lesssim (1+|\xi|^2)^{-\frac{\lambda}{2}}(t-u)^{\gamma}.
\end{equation*}
Thus, both conditions of \eqref{eq:integrand cond} in  Lemma \ref{Lem: Stochastic sewing lemma} are satisfied. By a simple addition and subtraction of the integrand $A_{s,t}^\xi$ in  \eqref{bounds on stochastic integral}, it follows that for all $p\geq 2$ the limiting process $\cA_{s,t}^\xi$ satisfies
\begin{equation}\label{eq:bound on integral A}
    \|\cA_{s,t}^\xi\|_{L^p(\Omega)} \lesssim (1+|\xi|^2)^{-\frac{\lambda}{2}}(t-u)^{\gamma}.
\end{equation}
We will now show that $\widehat{\mu_{s,t}}(\xi)=\cA_{s,t}^\xi$ in $L^p(\Omega)$.  For a partition $\cP$ of $[s,t]$, we have
\begin{align*}
    \|\widehat{\mu_{s,t}}(\xi)-\cA^\xi_{s,t}\|_{L^p(\Omega)} \leq \sum_{[u,v]\in \cP} \int_u^v \|e^{i\la \xi,z_r\ra}-\EE[e^{i\la \xi,z_r\ra}|\cF_u]\|_{L^p(\Omega)}\dd r.
\end{align*}
 By Minkowski's inequality, we have
\begin{equation*}
    \|e^{i\langle \xi,z_r\rangle} -\EE[e^{i\langle \xi,z_r\rangle}|\cF_u]\|_{L^p(\Omega)}\leq \|e^{i\langle \xi,z_r\rangle} -e^{i\langle \xi,z_u\rangle}\|_{L^p(\Omega)}+\|\EE[e^{i\langle \xi,z_r\rangle}-e^{i\langle \xi,z_u\rangle}|\cF_u]\|_{L^p(\Omega)},
\end{equation*}
and by Jensen's inequality it follows  that
\begin{equation*}
    \|\EE[e^{i\langle \xi,z_r\rangle}-e^{i\langle \xi,z_u\rangle}|\cF_u]\|_{L^p(\Omega)}\leq \|e^{i\langle \xi,z_r\rangle}-e^{i\langle \xi,z_u\rangle}\|_{L^p(\Omega)}.
\end{equation*}
This implies  that
\begin{equation*}
      \|e^{i\langle \xi,z_r\rangle} -\EE[e^{i\langle \xi,z_r\rangle}|\cF_u]\|_{L^p}\leq 2  \|e^{i\langle \xi,z_r\rangle} -e^{i\langle \xi,z_u\rangle}|\cF_u\|_{L^p(\Omega)}.
\end{equation*}
Furthermore, for fixed $\epsilon>0$ then
\begin{align*}
    \|e^{i\langle \xi,z_r\rangle}-e^{i\langle \xi,z_u\rangle}\|_{L^p(\Omega)}&
    \leq \|(e^{i\langle \xi,z_r\rangle}-e^{i\langle \xi,z_u\rangle})1_{|e^{i\langle \xi,z_r\rangle}-e^{i\langle \xi,z_u\rangle}|>\epsilon}\|_{L^p(\Omega)}+\epsilon
    \\
    &\leq \PP(|e^{i\langle \xi,z_r\rangle}-e^{i\langle \xi,z_u\rangle}|>\epsilon)+\epsilon
\end{align*}
since $\|e^{i\langle \xi,z_r\rangle}-e^{i\langle \xi,z_u\rangle}\|_{L^q(\Omega)}\leq 1$ for any $q$.
We conclude that for any $\epsilon>0$
\begin{equation}\label{eq:compare}
    \|\widehat{\mu_{s,t}}(\xi)-\cA^\xi_{s,t}\|_{L^p(\Omega)} \leq 2\epsilon(t-s)+   2\sum_{[u,v]\in \cP} \int_u^v\PP(|e^{i\langle \xi,z_r\rangle}-e^{i\langle \xi,z_u\rangle}|>\epsilon)\dd r.
\end{equation}
Since this holds for any partition $\cP$, letting the mesh tend to $0$, {and using the assumption that $z$ is continuous in probability, it follows that $e^{i\langle \xi,z_r\rangle}$ is continuous in probability (see e.g. the continuous mapping theorem), and thus
\begin{equation*}
   \lim_{|\cP|\rightarrow 0} \sum_{[u,v]\in \cP} \int_u^v\PP(|e^{i\langle \xi,z_r\rangle}-e^{i\langle \xi,z_u\rangle}|>\epsilon)\dd r \leq \lim_{|\cP|\rightarrow 0} \sup_{[u,v]\in \cP} \sup_{r\in [u,v]} \PP(|e^{i\langle \xi,z_r\rangle}-e^{i\langle \xi,z_u\rangle}|>\epsilon)(t-s)=0.
\end{equation*}
Since \eqref{eq:compare} holds for any partition (also for partitions with infinitesimal mesh), we conclude that for any $\epsilon>0$
\begin{equation*}
     \|\widehat{\mu_{s,t}}(\xi)-\cA^\xi_{s,t}\|_{L^p(\Omega)}\leq 2\epsilon(t-s)
\end{equation*}
and} since $\epsilon$ can be chosen arbitrarily small,  we conclude that $\widehat{\mu_{s,t}}(\xi)=\cA^\xi_{s,t}$ in $L^p(\Omega)$.

We move on to estimate the Sobolev norm $\|L_{s,t}\|_{H^{\kappa}}$ for some appropriate $\kappa\in \RR$. We begin to observe that
\begin{equation*}
    \|\|\mu_{s,t}\|_{H^\kappa}\|_{L^p(\Omega)}=\left[\EE\left(\int_{\RR^d} (1+|\xi|^2)^{\kappa}|\widehat{\mu_{s,t}}(\xi)|^2 d\xi\right)^\frac{p}{2}\right]^\frac{1}{p}.
\end{equation*}
By Minkowski's inequality, it follows that
\begin{equation*}
    \|\|\mu_{s,t}\|_{H^\kappa}\|_{L^p(\Omega)} \lesssim \|(1+|\cdot|^2)^{\frac{\kappa}{2}}\|\widehat{\mu_{s,t}}\|_{L^p(\Omega)}\|_{L^2(\RR^d)},
\end{equation*}
and then use the bound from \eqref{eq:bound on integral A} to observe that
\begin{equation*}
    \|\|\mu_{s,t}\|_{H^\kappa}\|_{L^p(\Omega)} \lesssim (t-s)^{\gamma}\|(1+|\cdot|^2)^{\frac{\kappa-\lambda}{2}}\|_{L^2(\RR)}.
\end{equation*}
Choosing $\kappa=\lambda-\frac{d}{2}-\epsilon$ for some arbitrarily small $\epsilon>0$, it follows that

 $$
 \|(1+|\cdot|^2)^{\frac{\kappa-\lambda}{2}}\|_{L^2(\RR)}= \|(1+|\cdot|^2)^{\frac{d}{4}-\epsilon/2}\|_{L^2(\RR)}<\infty.
 $$
Recalling that $\lambda<\frac{\alpha}{2\zeta}$, since $\epsilon>0$ could be chosen arbitrarily small, we obtain that for any $\kappa<\frac{\alpha}{2\zeta}-\frac{d}{2}$ there exists a $\gamma>\frac{1}{2}$ such that
\begin{equation*}
    \|\|\mu_{s,t}\|_{H^\kappa}\|_{L^p(\Omega)} \lesssim (t-s)^{\gamma}.
\end{equation*}
Since $p\geq 2$ can be chosen arbitrarily large,  we conclude by Kolmogorov's theorem of continuity that
 there exists a set $\Omega^\prime \subset \Omega$ of full measure  such that for all $\omega\in \Omega^\prime$ there exists a $C(\omega)>0$ such that
\begin{equation*}
    \|\mu_{s,t}(\omega)\|_{H^\kappa}\leq C(\omega)(t-s)^{\gamma}
\end{equation*}
In particular, this implies that for almost all $\omega\in \Omega$,  $\mu(\omega)\in L^2(\RR^d)$ and thus the local time $L(\omega)$ (given as the density of $\mu$) exists, and our claim follows.
\end{proof}

\begin{rem}
In the case when $\alpha=2$, then $X$ is a Gaussian process, and Theorem \ref{thm: regualrity of local times associated to alpha Volterra process} provides the same regularity of the associated local time for a Gaussian Volterra process as proven in for example \cite{HarangPerkowski2020} (or without considering the joint time regularity, as shown in  \cite{GerHoro,Berman73, Pitt78}). This theorem can therefore be seen as an extension of this work to the class of Volterra-L\'evy processes.
\end{rem}
We will now give several examples on the application of Theorem \ref{thm: regualrity of local times associated to alpha Volterra process} to show the regularity of the local time for a few specific Volterra $\alpha$-stable processes. All of the following examples also were studied for dimension $d=1$ in \cite[Corollary 4.6, Examples]{Nolan89}, it shows that the local time $L_{t}(x)$ of  a Volterra $\alpha$-stable process exists $a.s.$ and is continuous for $(t,x)\in[0,T]\times\mathbb{R}$, furthermore for fixed $t\in[0,T]$, $L_{t}(x)$ is H\"older continuous for $x\in \mathbb{R}$ with some order less than $1$.   The method therein \cite{Nolan89} heavily relies on the $L^\alpha$-representation for $\alpha$-stable processes.
\begin{ex}[\rm{Regularity of the local time for Volterra $\alpha$-stable processes}]\label{exVL}
 We consider  Volterra-$\alpha$-stable processes
 $$z_t=\int_0^tk(t-s)\dd \scL_s,\quad t\geq0,$$
 where $\scL$ is a $d$-dimensional standard $\alpha$-stable process with $\alpha\in(0,2]$.
\begin{itemize}[leftmargin=.3in]
    \item[\rm (i).] Let $d=1$ and $k(t)\equiv 1$ for all $t\geq0$, then $$z_t=\scL_t$$ is an $1$-dimensional standard $\alpha$-stable process. When $\alpha\in(1,2]$, we know that an $1$-dimensional standard $\alpha$-stable process $\scL$ is $(\alpha,1)$-LND, and  continuous in probability. According to above theorem there exists a $\gamma>\frac{1}{2}$, such that the local time associated to $z$, and thus also $\scL$  is contained in $\cC^\gamma_T H^\kappa$ for any $\kappa<\frac{\alpha}{2}-\frac{1}{2}$, $\PP$-a.s..
    \item[(ii)] Let $k(t)=e^{-at}$ with $a>0$. Then the Ornstein-Uhlenbeck L\'evy process
    $$z_t=\int_0^t e^{-a(t-s)}\dd \scL_s,\quad t\geq0$$
    is $(\alpha,\zeta)$-LND for $\alpha\in(0,2]$ and  $\zeta=1$, and continuous in probability. Hence  there exists a $\gamma>\frac{1}{2}$, such that  the process $z$ has a local time $L\in\cC^\gamma_T H^\kappa$ for any $\kappa<\frac{\alpha}{2}-\frac{d}{2}$,  $\PP$-a.s..

    \item[\rm (iii)] Let $(z_t)_{t\in [0,T]}$ be a fractional $\alpha$-stable process as in Example \ref{fr}. Then $(z_t)_{t\in [0,T]}$ is continuous in probability, and  there exists a $\gamma>\frac{1}{2}$ such that the local time $L$ associated to $z$ is contained in $\cC^\gamma_TH^\kappa$ for any $\kappa<\frac{1}{2H}-\frac{d}{2}$, $\PP$-a.s.. Note that in this case, one obtains the same regularity for the local time, as one would for the fractional Brownian motion (see e.g. \cite{HarangPerkowski2020}).

    \item[\rm (iv)] Fix $T<1$ and let $k(t)=t^{-\frac{1}{\alpha}}\ln(\frac{1}{t})^{-p}$ for some $p>1$, and suppose $(\scL_t)_{t\in [0,T]}$ is a standard $\alpha$-stable process for some $\alpha\in (0,2]$. Let $(z_t)_{t\in [0,T]}$ be the Volterra L\'evy process built from $k$ and $\scL$. It is readily checked with Lemma \ref{lem:cont in prob}, Example \ref{ex:standard alpha stable} and Example \ref{VK} that this process is continuous in probability. Moreover,  note that in this case $z$ is $(\alpha,\zeta)$-LND for any $\zeta>0$.
    Thus for any $\gamma\in (0,1)$  the local time $L$ associated to $z$ is contained in $\cC^\gamma_T H^\kappa$ for any $\kappa\in \RR$, $\PP$-a.s.. Furthermore, if $z$ is c\`adl\`ag, then the local time $L$ has compact support, $\PP$-a.s. and thus $L\in \cC^\gamma([0,T];\cD(\RR^d))$, $\PP$-a.s. where $\cD(\RR^d)$ denotes the space of test functions on $\RR^d$. This proves in particular Corollary \ref{cor: test}.

\end{itemize}

\end{ex}

\section{Regularization of ODEs perturbed by  Volterra-L\'evy  processes}\label{sec:regualrization by noise}
With the knowledge of the spatio-temporal regularity of the local time associated to a Volterra-L\'evy process, we can solve additive SDEs with possibly distributional-valued drift's coefficients. The goal of this section is to prove Theorem \ref{thm: main existence and uniqueness}. To this end, we will recall some of the tools from the theory of non-linear Young integrals and corresponding equations. This theory for construction of integrals and equations is by now well known (see e.g. \cite{Catellier2016, hu2017nonlinear} and more recently \cite{HarangPerkowski2020,galeati2020noiseless} for an overview), but for the sake of self-containedness, we have included some short versions of proofs in the appendix. We also  mention that conditions for existence and uniqueness of non-linear Young equations can be stated in more general terms than what is used here. We choose to work with a simple set of conditions to provide a clear picture of the regularising effect in SDEs driven by Volterra-L\'evy noise, in contrast to the full generality which could be accessible. More general conditions for existence and uniqueness of non-linear Young equations can for example be found in \cite{galeati2020noiseless}.

\begin{lem}\label{lem: abstract young equations}
Suppose $\Gamma:[0,T]\times \RR^d \rightarrow \RR^d$ is contained in $\cC^{\gamma}_T\cC^\kappa$  for some $\gamma\in (\frac{1}{2},1)$ and $\kappa> \frac{1}{\gamma}$, and satisfies the following inequalities for $(s,t)\in \Delta^T_2$ and $\xi,\tilde{\xi}\in \RR^d$
\begin{equation}\label{eq: conditions for ex and uni}
\begin{aligned}
{\rm (i)}&\qquad |\Gamma_{s,t}(\xi)|+|\nabla \Gamma_{s,t}(\xi)| \lesssim |t-s|^\gamma
\\
{\rm (ii)} &\qquad  |\Gamma_{s,t}(\xi)-\Gamma_{s,t}(\tilde{\xi})| \lesssim |t-s|^\gamma |\xi-\tilde{\xi}|
\\
{\rm (iii)}& \qquad |\nabla \Gamma_{s,t}(\xi)-\nabla \Gamma_{s,t}(\tilde{\xi})|\lesssim |t-s|^\gamma |\xi-\tilde{\xi}|^{\kappa-1}.
\end{aligned}
\end{equation}
Then for any $\xi\in \RR^d$ there exists a unique solution to the equation
\begin{equation}\label{eq: general ODE}
y_t=\xi+\int_0^t \Gamma_{\dd r}(y_r).
\end{equation}
Here the integral is interpreted as the non-linear Young integral described in Appendix \ref{app: nonlinear young equations}
\begin{equation*}
\int_0^t \Gamma_{\dd r}(y_r)=\lim_{|\cP|\rightarrow 0} \sum_{[u,v]\in \cP} \Gamma_{u,v}(y_u),
\end{equation*}
for any partition $\cP$ of $[0,t]$.
\end{lem}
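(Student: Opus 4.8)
The plan is to recast \eqref{eq: general ODE} as a fixed-point problem for the solution map $\cM(y)_t:=\xi+\int_0^t\Gamma_{\dd r}(y_r)$ on the Banach space $\cC^\gamma([0,\tau];\RR^d)$ of paths started at $\xi$, to solve it by Banach's fixed-point theorem on a short interval $[0,\tau]$, and then to concatenate the local solutions to cover $[0,T]$. The whole argument rests on two quantitative estimates for the non-linear Young integral, both obtained from the deterministic sewing lemma underlying Lemma \ref{lem: non linear young integral}.

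First I would record the \emph{size estimate}. For $y\in\cC^\gamma([0,\tau];\RR^d)$ the germ $(s,t)\mapsto\Gamma_{s,t}(y_s)$ has defect $\Gamma_{u,t}(y_s)-\Gamma_{u,t}(y_u)$, which by (ii) is bounded by $|t-u|^\gamma|y_s-y_u|\lesssim\|y\|_\gamma|t-u|^\gamma|s-u|^\gamma$; since $\gamma>\tfrac12$ we have $2\gamma>1$, so the sewing lemma both produces the limit of the Riemann sums defining $\int_s^t\Gamma_{\dd r}(y_r)$ and yields
\begin{equation*}
\Big|\int_s^t\Gamma_{\dd r}(y_r)-\Gamma_{s,t}(y_s)\Big|\lesssim\|y\|_\gamma\,|t-s|^{2\gamma}.
\end{equation*}
Together with (i) this shows $\cM(y)\in\cC^\gamma$ with $\|\cM(y)\|_\gamma\lesssim 1+\|y\|_\gamma\tau^\gamma$, so that a ball $B_R=\{\|y\|_\gamma\le R,\ y_0=\xi\}$ is left invariant by $\cM$ once $R$ is fixed and $\tau$ is small.

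The decisive ingredient is the \emph{stability estimate}. Writing $v=y-\tilde y$ and expanding $\Gamma_{u,t}$ to first order, the defect of the difference germ $\Gamma_{\cdot\cdot}(y_\cdot)-\Gamma_{\cdot\cdot}(\tilde y_\cdot)$ splits into a part estimated by (i) and $\|v\|_\gamma$ and a part estimated by the H\"older bound (iii) on $\nabla\Gamma$; the latter is where the regularity $\kappa$ enters and where the extra temporal factor $|s-u|^\gamma$ that renders the defect summable is produced. Applying the sewing lemma to this difference germ gives, for $v_0=0$,
\begin{equation*}
\|\cM(y)-\cM(\tilde y)\|_\gamma\lesssim\tau^\gamma\|v\|_\gamma+(\|y\|_\gamma+\|\tilde y\|_\gamma)\,\tau^{\gamma\kappa}\|v\|_\gamma^{\kappa-1},
\end{equation*}
where I have used the interpolation $\|v\|_\infty\le\tau^\gamma\|v\|_\gamma$ valid on $B_R$. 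When $\nabla\Gamma$ is Lipschitz, i.e. condition (iii) holds with exponent $\ge 1$ (this is exactly the case $\kappa=2$, $\Gamma=b\ast\bar L\in\cC^\gamma_T\cC^2$, relevant for Theorem \ref{thm: main existence and uniqueness}), the last term is $\lesssim R\,\tau^{\gamma\kappa}\|v\|_\gamma$ and shrinking $\tau$ turns $\cM$ into a strict contraction; Banach's theorem then yields a unique solution on $[0,\tau]$. For the general exponent one argues in the same spirit through the stability estimate combined with a Gronwall/continuation argument, following the more refined arguments of \cite{galeati2020noiseless}.

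Finally, since the constants in the two estimates depend on $\Gamma$ only through $\|\Gamma\|_{\cC^\gamma_T\cC^\kappa}$, which is uniform over $[0,T]$, and through $R$, the contraction horizon $\tau$ can be chosen independently of the starting time and value. Restarting the fixed point at $\tau,2\tau,\dots$, and using the uniform a priori $\cC^\gamma$ bound from the size estimate to exclude blow-up, the local solutions glue into a unique solution on all of $[0,T]$. I expect the main obstacle to be the stability estimate and the resulting contraction: it is there that the second-order condition (iii), and hence the regularity $\kappa>\tfrac1\gamma$, is indispensable, since the Lipschitz comparison alone produces a defect carrying no gain in $|s-u|$ and therefore not summable in the sewing lemma.
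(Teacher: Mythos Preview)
Your overall architecture (sewing lemma for the germ, Banach fixed point on a short interval, then iterate) is exactly the one the paper uses, and your size estimate is correct. The gap is in the stability step when $1<\kappa<2$, which is part of the range claimed in the lemma since $\gamma>\tfrac12$ only forces $\kappa>\tfrac1\gamma\in(1,2)$.

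Your stability bound
\[
\|\cM(y)-\cM(\tilde y)\|_\gamma\lesssim \tau^\gamma\|v\|_\gamma+R\,\tau^{\gamma\kappa}\|v\|_\gamma^{\kappa-1}
\]
is \emph{sublinear} in $\|v\|_\gamma$ when $\kappa<2$, so it is not a contraction and does not yield uniqueness: an inequality of the form $f\le c f^{\kappa-1}$ with $\kappa-1<1$ admits nonzero solutions. Invoking ``Gronwall/continuation'' does not close this; a sublinear Gronwall inequality gives an a priori bound, not $f=0$. So as written your argument proves the lemma only for $\kappa\ge 2$ (which, to be fair, covers the application in Theorem~\ref{thm: main existence and uniqueness}).

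The paper's fix is simple and worth knowing: run the fixed point not in $\cC^\gamma$ but in $\cC^\beta$ for some $\beta\in(\tfrac1\kappa,\gamma)$. With the same first-order expansion $\varXi_{s,t}=\int_0^1\nabla\Gamma_{s,t}(\rho y_s+(1-\rho)z_s)\,d\rho\,(y_s-z_s)$, one adds and subtracts $\nabla\Gamma_{u,t}(\rho y_s+(1-\rho)z_s)(y_u-z_u)$ in $\delta_u\varXi_{s,t}$. The point is that condition~(iii) is then applied to the \emph{time} increment of the interpolated argument, producing a factor $|u-s|^{\beta(\kappa-1)}$, while the difference $y-z$ enters only \emph{linearly} through the factor $(y_u-z_u)$. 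Since $\beta>\tfrac1\kappa$ gives $\gamma+\beta(\kappa-1)>1$, the sewing lemma yields the linear bound
\[
\|\fS_\tau(y)-\fS_\tau(z)\|_{\cC^\beta_\tau}\lesssim \tau^{\gamma-\beta}\|y-z\|_{\cC^\beta_\tau},
\]
which is a genuine contraction for small $\tau$. One then upgrades the solution from $\cC^\beta$ to $\cC^\gamma$ a posteriori using the size estimate. In short: put the H\"older exponent $\kappa-1$ on the path increment $y_s-y_u$ (to gain time regularity) rather than on the path difference $y-z$ (where it destroys linearity).
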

\begin{proof}
See proof in Appendix \ref{app: nonlinear young equations}.
\end{proof}

From here on, all analysis is done pathwise. That is, we now consider a subset $\Omega'\subset \Omega$ of full measure such that for all $\omega\in \Omega'$ the local time $L(\omega)$ associated to a Volterra-L\'evy process is contained in $\cC_T^\gamma H^\kappa$, for $\gamma$ and $\kappa$ as given through Theorem \ref{thm: regualrity of local times associated to alpha Volterra process}. With a slight abuse of notation, we will write $L=L(\omega)$. \\

Before moving on to prove existence and uniqueness of ODEs perturbed by  Volterra L\'evy processes, we will need  a technical proposition on the convolution of the local time with certain (possibly distributional) vector fields.

\begin{prop}\label{prop: regularity of conv}
Let $(z_t)_{t\in [0,T]}$ be a Volterra L\'evy process which is continuous in probability and  $(\alpha,\zeta)$-LND    for some $\zeta\in (0,1]$, such that  the associated local time $L$ is contained in $\cC^\gamma_T H^\kappa$ for some $\gamma>\frac{1}{2}$ and $\kappa <\frac{\alpha}{2\zeta}-\frac{d}{2}$. Suppose $b\in H^\beta$ for some $\beta\in \RR$. Then the following inequality holds for any $\theta<\beta+\kappa$ and $(s,t)\in \Delta^2_T$
\begin{equation}\label{reg of conv}
\|b\ast \bar{L}_{s,t}\|_{\cC^{\theta}}\lesssim \|b\|_{H^\beta} \|L\|_{\cC^\gamma_TH^\kappa}|t-s|^\gamma, \qquad \PP-a.s.
\end{equation}
Here, $\bar{L}_t(x)=L_t(-x)$.
\end{prop}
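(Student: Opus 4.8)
The plan is to recognize \eqref{reg of conv} as a Young-type convolution estimate in Besov spaces, upgraded to a time-increment bound by the hypothesis $L\in\cC^\gamma_TH^\kappa$. Since $\cC^\theta=\cB^\theta_{\infty,\infty}$ and $H^s=\cB^s_{2,2}$, the heart of the matter is the purely spatial inequality
$$\|b\ast \bar L_{s,t}\|_{\cC^\theta}\lesssim \|b\|_{H^\beta}\,\|\bar L_{s,t}\|_{H^\kappa},\qquad \theta<\beta+\kappa,$$
after which I would simply invoke reflection invariance of the Sobolev norm, $\|\bar L_{s,t}\|_{H^\kappa}=\|L_{s,t}\|_{H^\kappa}$ (the map $\xi\mapsto-\xi$ preserves $(1+|\xi|^2)^\kappa$ under $\mathscr{F}$), together with $\|L_{s,t}\|_{H^\kappa}\le \|L\|_{\cC^\gamma_TH^\kappa}|t-s|^\gamma$, to conclude.

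To prove the displayed convolution bound I would work directly with the Paley--Littlewood blocks. Since convolution multiplies Fourier transforms, $\Delta_j(b\ast \bar L_{s,t})=\sum_i (\Delta_j b)\ast(\Delta_i\bar L_{s,t})$, and the summand is nonzero only when the annular supports of $\rho_j$ and $\rho_i$ overlap, i.e. for $|i-j|\le 1$. On each such term I apply the elementary Young inequality $\|f\ast g\|_{L^\infty}\le\|f\|_{L^2}\|g\|_{L^2}$ (Cauchy--Schwarz), giving
$$\|\Delta_j(b\ast\bar L_{s,t})\|_{L^\infty}\le \sum_{|i-j|\le 1}\|\Delta_j b\|_{L^2}\,\|\Delta_i\bar L_{s,t}\|_{L^2}.$$
The definition of the $\cB^s_{2,2}=H^s$ norm yields $\|\Delta_j b\|_{L^2}\le 2^{-j\beta}\|b\|_{H^\beta}$ and $\|\Delta_i\bar L_{s,t}\|_{L^2}\le 2^{-i\kappa}\|\bar L_{s,t}\|_{H^\kappa}$; since $2^{-i\kappa}\simeq 2^{-j\kappa}$ over the finitely many indices $|i-j|\le1$, I obtain $\|\Delta_j(b\ast\bar L_{s,t})\|_{L^\infty}\lesssim 2^{-j(\beta+\kappa)}\|b\|_{H^\beta}\|\bar L_{s,t}\|_{H^\kappa}$. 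Multiplying by $2^{j\theta}$ and taking the supremum over $j\ge-1$, the factor $2^{j(\theta-\beta-\kappa)}$ is uniformly bounded precisely because $\theta<\beta+\kappa$, which delivers the convolution estimate.

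The steps above are essentially routine harmonic analysis, so I do not anticipate a deep obstacle; the one point demanding care is the exponent bookkeeping in Young's inequality. Because the target space $\cC^\theta$ has integrability exponent $\infty$ while both inputs live at $L^2$, the Young relation $\tfrac12+\tfrac12-1=0$ forces exactly the endpoint $L^2\ast L^2\hookrightarrow L^\infty$ — there is no room to spare in the integrability indices, and it is this balance that makes the two Sobolev spaces combine into the H\"older--Besov space of order $\beta+\kappa$. The only mild subtlety is to invoke the equivalence $H^s=\cB^s_{2,2}$ (stated in the excerpt) and the frequency almost-orthogonality $|i-j|\le1$ to avoid an otherwise divergent sum over frequencies; the strict inequality $\theta<\beta+\kappa$ then makes the final supremum trivially finite and absorbs any borderline behaviour at the endpoint $\theta=\beta+\kappa$.
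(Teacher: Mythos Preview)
Your argument is correct and is precisely a detailed unpacking of what the paper's one-line proof means by ``an application of Young's convolution inequality'': the paper simply cites the Besov-space convolution estimate $H^\beta\ast H^\kappa\hookrightarrow\cC^\theta$ for $\theta<\beta+\kappa$, while you prove it via Littlewood--Paley blocks and the $L^2\ast L^2\hookrightarrow L^\infty$ endpoint. The approach is the same; you have just supplied the harmonic-analysis details the paper leaves implicit.
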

\begin{proof}
From Theorem  \ref{thm: regualrity of local times associated to alpha Volterra process}, we know that $L_{s,t}\in H^\kappa$ for $\kappa<\frac{\alpha}{2\zeta}-\frac{d}{2}$, thus, an application of Young's convolution inequality, reveals that \eqref{reg of conv} holds.
\end{proof}

A combination of Lemma \ref{lem: abstract young equations} and Proposition \ref{prop: regularity of conv} provides the existence and uniqueness of ODEs perturbed by $(\alpha,\zeta)$-LND  Volterra L\'evy  processes.
The following corollary and proposition can be seen as proof of Theorem \ref{thm: main existence and uniqueness}.

\begin{cor}[SDEs driven by stable Volterra processes]\label{cor: ex and uni sde}
Let $(z_t)_{t\in [0,T]}$ be a Volterra-L\'evy process which is continuous in probability, and  $(\alpha,\zeta)$-LND  according to definition \ref{def: zeta alpha Volterra kernel} for some $\zeta\in (0,\frac{\alpha}{d})$ and $\alpha\in (0,2]$. Suppose $b\in H^\beta$ for some $\beta \in \RR$ such that the following inequality holds $\beta+\frac{\alpha}{2\zeta}-\frac{d}{2}\geq 2$.
Then for any $\xi\in \RR^d$ there exists a unique solution $y\in \cC^\gamma_T(\RR^d)$ to the equation
\begin{equation}\label{eq:particular equation}
    y_t=\xi+\int_0^t b\ast \bar{L}_{\dd r}(y_r),\qquad t\in [0,T].
\end{equation}
Here the integral and solution is interpreted pathwise in sense of Lemma \ref{lem: abstract young equations} by setting $\Gamma_{s,t}(x):=b\ast \bar{L}_{s,t}(x)$, where we recall that $\bar{L}_t(x)=L_t(-x)$ and $L$ is the local time associated to $ (z_t)_{t\in [0,T]}$.
\end{cor}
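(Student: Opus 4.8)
The plan is to verify that $\Gamma_{s,t}(x) := b \ast \bar{L}_{s,t}(x)$ satisfies the hypotheses of Lemma \ref{lem: abstract young equations}, so that the existence and uniqueness of a solution $y \in \cC^\gamma_T(\RR^d)$ to \eqref{eq:particular equation} follows immediately by invoking that lemma. The entire argument therefore reduces to a regularity bookkeeping exercise: checking that $\Gamma \in \cC^\gamma_T \cC^\kappa$ with $\gamma \in (\tfrac{1}{2},1)$ and $\kappa > \tfrac{1}{\gamma}$, and that the three inequalities (i)--(iii) in \eqref{eq: conditions for ex and uni} hold. First I would record, via Theorem \ref{thm: regualrity of local times associated to alpha Volterra process}, that since $z$ is $(\alpha,\zeta)$-LND and continuous in probability with $\zeta \in (0,\tfrac{\alpha}{d})$, its local time $L$ lies in $\cC^\gamma_T H^\kappa$ for some $\gamma > \tfrac{1}{2}$ and any $\kappa < \tfrac{\alpha}{2\zeta} - \tfrac{d}{2}$.

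Next I would feed this into Proposition \ref{prop: regularity of conv}. With $b \in H^\beta$, that proposition gives, for any $\theta < \beta + \kappa$,
\begin{equation*}
\|b \ast \bar{L}_{s,t}\|_{\cC^\theta} \lesssim \|b\|_{H^\beta} \|L\|_{\cC^\gamma_T H^\kappa} |t-s|^\gamma, \qquad \PP\text{-a.s.}
\end{equation*}
The key is to choose the regularity budget correctly. The hypothesis $\beta + \tfrac{\alpha}{2\zeta} - \tfrac{d}{2} \geq 2$ means that by taking $\kappa$ close enough to $\tfrac{\alpha}{2\zeta} - \tfrac{d}{2}$ we can arrange $\beta + \kappa > 2$, and hence pick a Hölder exponent $\theta$ with $2 < \theta < \beta + \kappa$. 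Setting $\Gamma_{s,t} := b \ast \bar{L}_{s,t}$, the displayed bound shows $\Gamma \in \cC^\gamma_T \cC^\theta$ with $\theta > 2 > \tfrac{1}{\gamma}$ (since $\gamma > \tfrac{1}{2}$), so the structural assumption $\Gamma \in \cC^\gamma_T \cC^\kappa$ of Lemma \ref{lem: abstract young equations} is met with its exponent equal to this $\theta$.

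It then remains to extract conditions (i)--(iii) from membership in $\cC^\gamma_T\cC^\theta$ with $\theta > 2$. Condition (i) controls $|\Gamma_{s,t}(x)| + |\nabla \Gamma_{s,t}(x)|$ by $|t-s|^\gamma$: the $\cC^\theta$-norm with $\theta > 2$ bounds the sup-norms of $\Gamma_{s,t}$ and its first derivative uniformly in $x$, while the factor $|t-s|^\gamma$ comes from the $\cC^\gamma_T$ time-Hölder estimate. Condition (ii) is the Lipschitz bound in $x$, which follows from the uniform control of $\nabla \Gamma_{s,t}$ by a multiple of $|t-s|^\gamma$ together with the mean value theorem. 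Condition (iii) asks that $\nabla \Gamma_{s,t}$ be Hölder of order $\kappa - 1$ in $x$ (here with the lemma's exponent being $\theta$, so order $\theta - 1$, which majorizes any smaller order we need); since $\theta > 2$, $\nabla \Gamma_{s,t}$ is at least Lipschitz, again with constant $\lesssim |t-s|^\gamma$, giving the required Hölder estimate. The main obstacle, such as it is, is purely notational: one must carefully track that the abstract exponent $\kappa$ appearing in Lemma \ref{lem: abstract young equations} is the convolution-smoothness $\theta$ chosen here and not the Sobolev exponent $\kappa$ of the local time, and verify that $\theta > 2$ delivers each of (i)--(iii). With all hypotheses of Lemma \ref{lem: abstract young equations} verified, it yields a unique $y \in \cC^\gamma_T(\RR^d)$ solving \eqref{eq:particular equation}, completing the proof.
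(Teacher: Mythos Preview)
Your proposal is correct and follows essentially the same route as the paper: invoke Proposition \ref{prop: regularity of conv} (which in turn rests on Theorem \ref{thm: regualrity of local times associated to alpha Volterra process}) to place $\Gamma_{s,t}=b\ast\bar{L}_{s,t}$ in $\cC^\gamma_T\cC^\theta$ with $\theta$ close to $\beta+\tfrac{\alpha}{2\zeta}-\tfrac{d}{2}\geq 2$, and then apply Lemma \ref{lem: abstract young equations}. The paper's proof is terser---it simply records that conditions (i)--(iii) ``follow directly''---whereas you spell out how each inequality is read off from the $\cC^\theta$ bound; this is a welcome elaboration but not a different argument.
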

\begin{proof}
By Proposition \ref{prop: regularity of conv}, we know that $b\ast L \in \cC^\gamma_T\cC^{\theta}$ for any $\theta<\beta+\frac{\alpha}{2\zeta}-\frac{d}{2}$. Since $\beta+\frac{\alpha}{2\zeta}-\frac{d}{2}\geq 2$, set $\Gamma_{s,t}(x):=b\ast \bar{L}_{s,t}(x)$, and it follows directly that conditions {\rm (i)-(iii)} of Lemma \ref{lem: abstract young equations} are satisfied, and thus a unique solution to    \eqref{eq:particular equation} exists.
\end{proof}

Additionally to existence and uniqueness, the authors of \cite{HarangPerkowski2020} provided a general program to prove higher order differentiability of the flow mapping $\xi \mapsto y_t(\xi)$. We will here apply this program in order to show differentiability of flows associated to ODEs perturbed by
 sample paths of a Volterra-L\'evy process. It is well known that if $b\in C^k$ for some $k\geq 1$, the the flow $\xi\mapsto y_t(\xi)$ where $y$ is the solution to the ODE
\begin{equation*}
    y_t =\xi+\int_0^t b(y_r)\dd r,
\end{equation*}
is $k$-times differentiable. Translating this to the abstract framework of non-linear Young equations; let $y$ be the solution to
\begin{equation*}
    y_t=\xi+\int_0^t \Gamma_{\dd r}(y_r),\qquad \xi\in \RR^d,
\end{equation*}
where $\Gamma\in \cC^\gamma_T\cC^\kappa$ for some $\kappa>\frac{1}{\gamma}$. Then the flow $\xi\mapsto y_t(\xi)$ is $\kappa$ times differentiable. Recall that $\Gamma$ in our setting represents the convolution between the (possibly distributional) vector field $ b$ and the local time associated to the irregular path of a Volterra-L\'evy process. We therefore provide a proposition to highlight the relationship between the regularity of the vector field $b$, the regularity of the local time associated to a Volterra L\'evy process, and the differentiability of the flow.

\begin{prop}
Let $(z_t)_{t\in [0,T]}$ be a Volterra-L\'evy process taking values in $\RR^d$ which is continuous in probability, and  $(\alpha,\zeta)$-LND  for some $\alpha\in (0,2]$ and $\zeta\in(0,\frac{\alpha}{d})$. Suppose $b\in H^\beta$ for some $\beta\in \RR$ such that $\beta+\frac{\alpha}{2\zeta}-\frac{d}{2}\geq 1+n$ for some integer $n\geq 1$. Let $y(\xi)\in \cC^\gamma_T(\RR^d)$ denote the solution to \eqref{eq:particular equation} starting in $\xi\in \RR^d$. Then, the flow map $\xi\mapsto y_\cdot(\xi)$  is $n$-times Fr\'echet differentiable.
\end{prop}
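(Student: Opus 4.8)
The plan is to reduce the statement to the abstract differentiability program for non-linear Young equations developed in \cite{HarangPerkowski2020}, applied to the specific field $\Gamma_{s,t}(x):=b\ast\bar L_{s,t}(x)$. First I would record the spatial regularity available: by Proposition \ref{prop: regularity of conv} together with Theorem \ref{thm: regualrity of local times associated to alpha Volterra process}, we have $\Gamma\in\cC^\gamma_T\cC^\theta$ for every $\theta<\beta+\frac{\alpha}{2\zeta}-\frac{d}{2}$ and some $\gamma>\frac12$. Since by hypothesis $\beta+\frac{\alpha}{2\zeta}-\frac{d}{2}\geq 1+n$, one may fix $\theta$ with $n<\theta<\beta+\frac{\alpha}{2\zeta}-\frac{d}{2}$ and $\theta>\frac1\gamma$ (possible because $\gamma>\frac12$ forces $\frac1\gamma<2\le 1+n$), so that $\Gamma\in\cC^\gamma_T\cC^\theta$ with $\theta$ exceeding $n$. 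Existence and uniqueness of the flow $y(\xi)$ is already furnished by Corollary \ref{cor: ex and uni sde} (which applies since $\beta+\frac{\alpha}{2\zeta}-\frac{d}{2}\geq 2$), so it remains only to establish differentiability of $\xi\mapsto y(\xi)$ up to order $n$.

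Next I would set up the variational (linearized) equations obtained by formally differentiating the fixed point relation $y_t=\xi+\int_0^t\Gamma_{\dd r}(y_r)$ in the initial condition. The candidate first derivative $J_t(\xi)=\nabla_\xi y_t(\xi)\in\RR^{d\times d}$ should solve the linear non-linear-Young equation
\begin{equation*}
J_t=I+\int_0^t (\nabla\Gamma)_{\dd r}(y_r)\,J_r,
\end{equation*}
while the $k$-th derivative $D^k y$, for $2\le k\le n$, should solve an affine linear equation of the form
\begin{equation*}
D^k y_t=\int_0^t (\nabla\Gamma)_{\dd r}(y_r)\,D^k y_r+\int_0^t R^{(k)}_{\dd r},
\end{equation*}
where the inhomogeneity $R^{(k)}$ is a Fa\`a di Bruno type expression built from $(\nabla^j\Gamma)(y)$ for $2\le j\le k$ and products of the lower order derivatives $D^i y$, $i<k$. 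Because $\nabla^j\Gamma\in\cC^\gamma_T\cC^{\theta-j}$ with $\theta-j>0$ for every $j\le n$, each of these coefficients and inhomogeneities is a well-defined non-linear Young integrand; I would solve the homogeneous linear equation for $J$ as a fixed point in $\cC^\gamma_T$ in the manner of Lemma \ref{lem: abstract young equations}, and then solve for $D^k y$ inductively, the inhomogeneity being controlled by the induction hypothesis.

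The heart of the argument, and the main obstacle, is to justify that these candidates are genuinely the Fr\'echet derivatives rather than merely formal solutions. I would proceed by induction on $k$: assuming $\xi\mapsto y(\xi)$ is $(k-1)$-times continuously differentiable, I would write the difference $y(\xi+h)-y(\xi)$, insert the mean value form of $\Gamma_{u,v}(y_u(\xi+h))-\Gamma_{u,v}(y_u(\xi))$ into the Riemann sums defining the Young integral, and derive an equation for the remainder $\rho^h:=y(\xi+h)-y(\xi)-J(\xi)h$. A Gronwall type estimate in the Young setting — relying on conditions {\rm (i)--(iii)} of Lemma \ref{lem: abstract young equations} for $\Gamma$ and on the $\theta>n$ Hölder regularity of the top order field $\nabla^{k}\Gamma$ — then yields $\|\rho^h\|_{\cC^\gamma_T}=o(|h|)$, establishing Gateaux differentiability; continuity of $\xi\mapsto D^k y(\xi)$, obtained from the continuous dependence of the variational equations on their (continuously varying) coefficients, upgrades this to Fr\'echet differentiability. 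The delicate points are the convergence of the higher order non-linear Young integrals defining $R^{(k)}$, which requires that the spatial regularity $\theta-k>0$ survive composition with the flow, and the stability estimate closing the induction; both are handled exactly as in \cite{HarangPerkowski2020}, the regularity budget $\theta>n$ guaranteeing that the top order term $\nabla^n\Gamma$ retains strictly positive spatial Hölder regularity.
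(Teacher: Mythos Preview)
Your proposal is correct and follows essentially the same approach as the paper: both reduce the statement to the abstract differentiability program for non-linear Young equations from \cite[Thm.~2]{HarangPerkowski2020}, applied to $\Gamma_{s,t}=b\ast\bar L_{s,t}$, after recording that $\Gamma\in\cC^\gamma_T\cC^\theta$ with $\theta$ exceeding the required threshold. The paper's own proof is in fact considerably terser than yours --- it only writes down the linear variational equation for the first derivative and defers the rest to \cite{HarangPerkowski2020} --- so your more detailed sketch of the higher-order Fa\`a di Bruno structure and the remainder estimate closing the induction is a faithful elaboration of the same method rather than a different route.
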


\begin{proof}
This result for abstract Young equations was proven in \cite[Thm. 2]{HarangPerkowski2020}, but we give a short outline of the proof here.
Denote by $\theta=\beta+\frac{\alpha}{2\zeta}-\frac{d}{2}$, and since $\theta\geq 2$ it follows that there exists a unique solution to \eqref{eq:particular equation}.
We will prove the differentiability of the flow by induction, and begin to show the existence of the first derivative. It is readily checked that the first derivative of the flow $\xi\mapsto y_\cdot(\xi)$ needs to satisfy the equation
\begin{equation}\label{eq:gradient linear ODE}
    \nabla y_t(\xi)=1 +\int_0^t \nabla \Gamma_{\dd r}(y_r(\xi)) \nabla y_r(\xi), \qquad {\rm for} \qquad  t\in [0,T],
\end{equation}
where the integral is understood in sense of the non-linear Young integral in Lemma \ref{lem: non linear young integral},by setting  $\Gamma^1_{s,t}(u_s)=\nabla \Gamma_{s,t}(y_s(\xi)) u_s $ where  $u_s=\nabla y_s(\xi) \in \RR^{d\times d}$.  Since \eqref{eq:gradient linear ODE} is a linear equation, existence and uniqueness can be simply verified following along the lines of the proof of Lemma \ref{lem: abstract young equations}.

\end{proof}

\appendix

\section{Stochastic Sewing Lemma}
We recite the stochastic Sewing lemma given in \cite{le2018} for self containedness. However, we refer to the aforementioned article for a discussion and full proof of this statement.

\begin{lem}\label{Lem: Stochastic sewing lemma}
Let $(\Omega,\mathcal{F},\{\mathcal{F}_t\}_{t\in[0,T]},\mathbb{P})$ be a complete probability space, where $\mathcal{F}_0$ contains all $\mathbb{P}$-null sets. Suppose $p\geq 2$ and let $A:\Delta^2_T\rightarrow \mathbb{R}^d$ be a stochastic process such that $A_{s,s}=0$,  $A_{s,t}$ is ($\mathcal{F}_t$)-measurable, and $(s,t)\mapsto A_{s,t}$ is right-continuous from  $\Delta^2_T$ into $L^p(\Omega)$. Set $\delta_u A_{s,t}:=A_{s,t}-A_{s,u}-A_{u,t}$ for all $(s,u,t)\in \Delta^3_T$, and assume that there exists constants $\beta>1$, $\kappa>\frac{1}{2}$, and $C_1,C_2>0$ such that
\begin{equation}\label{eq:integrand cond}
\begin{aligned}
        \|\mathbb{E}\left[\delta_u A_{s,t}|\mathcal{F}_s\right]\|_{L^p(\Omega)} &\leq C_1 |t-s|^\beta
        \\
        \|\delta_u A_{s,t}\|_{L^p(\Omega)} &\leq C_2 |t-s|^\kappa.
\end{aligned}
\end{equation}
Then there exists a unique (up to modifications) $(\mathcal{F}_t)$-adapted stochastic process $\mathcal{A}$ such that the following properties are satisfied:
\begin{itemize}[leftmargin=.3in]
    \item[\rm (i)] $\mathcal{A}:[0,T]\rightarrow L^p(\Omega)$ is right continuous, and $\mathcal{A}_0=0$.
    \item[\rm (ii)] There exists two constants $C_1,C_2>0$ such that the following inequalities holds
    \begin{equation}\label{bounds on stochastic integral}
        \begin{aligned}
             \|\mathcal{A}_{s,t}-A_{s,t}\|_{L^p(\Omega)}&\leq C_1 |t-s|^\beta+C_2|t-s|^\kappa
             \\
             \|\EE\left[\mathcal{A}_{s,t}-A_{s,t}|\cF_s\right]\|_{L^p(\Omega)}& \leq C_1 |t-s|^\beta,
        \end{aligned}
    \end{equation}
    where we write $\cA_{s,t}=\cA_t-\cA_s$.
\end{itemize}
Furthermore, for every $(s,t)\in \Delta^2_T$ and partition $\cP$ of $[s,t]$, define
\begin{equation*}
    A^{\cP}_{s,t}:=\sum_{[u,v]\in \cP}A_{u,v}.
\end{equation*}
Then $A^{\cP}_{s,t}$ converge to $\cA_{s,t}$ in $L^p(\Omega)$ when the mesh size $|\cP|\rightarrow 0$.
\end{lem}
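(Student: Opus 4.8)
This is the stochastic sewing lemma of \cite{le2018}, and I would prove it by a dyadic refinement argument patterned on the deterministic sewing lemma, the decisive new ingredient being that the centered increments of $A$ form martingale differences. This is exactly what makes the exponent $\kappa>\frac12$ in the second bound of \eqref{eq:integrand cond} sufficient, whereas a purely deterministic argument would require $\kappa>1$: the $L^p$-norms of a martingale-difference sum add \emph{in square}, not linearly, producing the extra square-root gain.

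First I would fix $(s,t)\in\Delta^2_T$ and, for each $n\geq0$, let $\cP_n$ be the partition of $[s,t]$ into $2^n$ dyadic subintervals with endpoints $t_0<\dots<t_{2^n}$, setting $A^n_{s,t}:=\sum_i A_{t_i,t_{i+1}}$, so that $A^0_{s,t}=A_{s,t}$. Refining from level $n$ to $n+1$ splits each $[t_i,t_{i+1}]$ at its midpoint $m_i$, and the definition of $\delta$ gives
\begin{equation*}
A^{n+1}_{s,t}-A^n_{s,t}=-\sum_i\delta_{m_i}A_{t_i,t_{i+1}}.
\end{equation*}
I would then split each summand into a drift part and a centered part,
\begin{equation*}
\delta_{m_i}A_{t_i,t_{i+1}}=\EE\!\left[\delta_{m_i}A_{t_i,t_{i+1}}\mid\cF_{t_i}\right]+M_i,\qquad M_i:=\delta_{m_i}A_{t_i,t_{i+1}}-\EE\!\left[\delta_{m_i}A_{t_i,t_{i+1}}\mid\cF_{t_i}\right].
\end{equation*}
Using $|t_{i+1}-t_i|=|t-s|2^{-n}$, the first bound in \eqref{eq:integrand cond} controls the drift part by
\begin{equation*}
\Big\|\sum_i\EE[\delta_{m_i}A_{t_i,t_{i+1}}\mid\cF_{t_i}]\Big\|_{L^p(\Omega)}\leq 2^n C_1\big(|t-s|2^{-n}\big)^\beta=C_1|t-s|^\beta\,2^{n(1-\beta)},
\end{equation*}
which is summable in $n$ since $\beta>1$. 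For the centered part, $M_i$ is $\cF_{t_{i+1}}$-measurable with $\EE[M_i\mid\cF_{t_i}]=0$, and since $M_j$ for $j<i$ is $\cF_{t_i}$-measurable, $(M_i)_i$ is a martingale-difference sequence; Burkholder--Davis--Gundy followed by Minkowski's inequality in $L^{p/2}$ (valid for $p\geq2$) then gives
\begin{equation*}
\Big\|\sum_i M_i\Big\|_{L^p(\Omega)}^2\lesssim_p\sum_i\|M_i\|_{L^p(\Omega)}^2\leq 2^n(2C_2)^2\big(|t-s|2^{-n}\big)^{2\kappa}=C\,|t-s|^{2\kappa}\,2^{n(1-2\kappa)},
\end{equation*}
which is summable since $\kappa>\frac12$. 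Hence $(A^n_{s,t})_n$ is Cauchy in $L^p(\Omega)$ and I define $\cA_{s,t}:=\lim_n A^n_{s,t}$. Summing the two geometric series from level $0$ reproduces the first line of \eqref{bounds on stochastic integral}, while observing that $\EE[M_i\mid\cF_s]=0$ shows the centered contributions drop out under $\EE[\,\cdot\mid\cF_s]$, leaving only the drift series and hence the sharper conditional bound.

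Next I would establish the structural claims. Additivity $\cA_{s,t}=\cA_{s,u}+\cA_{u,t}$ follows by comparing the dyadic sums on $[s,t]$ with the concatenation of those on $[s,u]$ and $[u,t]$ and passing to the limit; this lets one realize $\cA$ as increments $\cA_t-\cA_s$ of an $(\cF_t)$-adapted, right-continuous process, the regularity and adaptedness being inherited from $A$ together with the uniform bounds. To promote the convergence from dyadic to arbitrary partitions $\cP$ (the concluding assertion of the lemma), I would use the same decomposition to estimate the change caused by deleting a single point from a partition and iterate, obtaining $\|A^\cP_{s,t}-\cA_{s,t}\|_{L^p(\Omega)}\to0$ as $|\cP|\to0$. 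Uniqueness comes from the same machinery applied to the difference $D_{s,t}:=\cA_{s,t}-\cA'_{s,t}$ of two candidates: $D$ is additive, so $D_{s,t}=\sum_i D_{t_i,t_{i+1}}$ over $\cP_n$, and the two hypotheses force $\|D_{s,t}\|_{L^p(\Omega)}\lesssim|t-s|^\beta2^{n(1-\beta)}+|t-s|^\kappa2^{n(1/2-\kappa)}\to0$, so $D\equiv0$.

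The step I expect to be the main obstacle is the centered estimate: one must be careful to center each increment against the $\sigma$-algebra at its \emph{left} endpoint so that the $M_i$ are genuine martingale differences, and then combine Burkholder--Davis--Gundy with the Minkowski inequality to reduce the sum to $\sum_i\|M_i\|_{L^p}^2$. This is precisely where the relaxation from $\kappa>1$ to $\kappa>\frac12$ originates, and what sets the stochastic sewing lemma apart from its deterministic forerunner. The passage from dyadic to general partitions, though routine, also demands some bookkeeping to keep all constants independent of $\cP$.
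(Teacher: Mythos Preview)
The paper does not actually prove this lemma: it is stated in the appendix with the explicit remark that the authors ``refer to the aforementioned article for a discussion and full proof of this statement,'' i.e.\ the proof is deferred entirely to \cite{le2018}. Your proposal is a faithful outline of the original argument in \cite{le2018}: dyadic refinement, splitting $\delta_{m_i}A$ into its $\cF_{t_i}$-conditional expectation plus a martingale-difference remainder, summing the drift part linearly and the centered part via BDG/Minkowski to exploit the square-root gain that permits $\kappa>\tfrac12$, then additivity, general partitions, and uniqueness. So your approach coincides with the proof the paper is citing; there is nothing to compare within the paper itself.
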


\section{Non-linear Young integration and equations}\label{app: nonlinear young equations}
This section is devoted to give the necessary background regarding the non-linear young integral, and Young equations. We begin to prove the existence of the non-linear Young integral.

\begin{lem}\label{lem: non linear young integral}
Let $\Gamma:[0,T]\times \RR^d \rightarrow \RR^d$ be contained in $\cC^\gamma_T\cC^1$  and satisfy the following condition for $x,y\in \RR^d$
\begin{equation}\label{eq: Gamma assumption}
  |\Gamma_{s,t}(x)|\lesssim \|\Gamma\|_{\cC^\gamma_T\cC^1} |t-s|^\gamma \qquad {\rm and} \qquad   |\Gamma_{s,t}(x)-\Gamma_{s,t}(y)|\lesssim \|\Gamma\|_{\cC^\gamma_T\cC^1}|x-y||t-s|^\gamma,
\end{equation}
for some $\gamma\in (0,1)$.
Furthermore suppose $y:[0,T]\rightarrow \RR^d$ is contained in $ \cC^\eta_T$ such that $\gamma+\eta>1$. For a partition $\cP$ of the interval $[0,T]$, define $\varXi_{s,t}:=\Gamma_{s,t}(y_s)$ and the sum
\begin{equation}\label{eq: abstract R sum}
    \cI_\cP = \sum_{[u,v]\in \cP} \varXi_{u,v}.
\end{equation}
Then there exists a unique function $\cI\in \cC^\gamma_T$ satisfying $\cI_{s,t}=\cI_t-\cI_s$ given by $\cI_t:=\lim_{|\cP|\rightarrow 0}\cI_\cP$. We then define
\begin{equation*}
    \int_s^t \Gamma_{\dd r}(y_r)=\cI_{s,t}.
\end{equation*}
Moreover, we have that $\|\delta \Gamma(y)\|_{\cC^{\gamma+\eta}}\lesssim \|\Gamma\|_{\cC^\gamma_T\cC^1}\|y\|_{\cC^\eta}$, and it follows from \cite[Lemma 4.2]{Friz2014} that
\begin{equation}\label{eq: inc of int ineq}
    |\int_s^t \Gamma_{\dd r}(y_r)-\Gamma_{s,t}(y_s)|\lesssim \|\Gamma\|_{\cC^\gamma_T\cC^1}\|y\|_{\cC^\eta}|t-s|^{\eta+\gamma}.
\end{equation}
\end{lem}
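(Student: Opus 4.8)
The plan is to realise $\cI$ via the deterministic sewing lemma (see \cite[Lemma 4.2]{Friz2014}) applied to the two-parameter germ $\varXi_{s,t}:=\Gamma_{s,t}(y_s)$, so that the only genuine work is to verify the two sewing hypotheses for $\varXi$. Since $\Gamma\in\cC^\gamma_T\cC^1$ and $y$ is continuous, $\varXi$ is continuous in $(s,t)$, and the first hypothesis, a H\"older bound on $\varXi$ itself, is immediate from the first inequality in \eqref{eq: Gamma assumption}:
\begin{equation*}
|\varXi_{s,t}|=|\Gamma_{s,t}(y_s)|\lesssim \|\Gamma\|_{\cC^\gamma_T\cC^1}|t-s|^\gamma.
\end{equation*}
The second, and decisive, hypothesis is a super-linear bound on the defect $\delta_u\varXi_{s,t}:=\varXi_{s,t}-\varXi_{s,u}-\varXi_{u,t}$.

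The heart of the argument is the algebraic identity obtained by exploiting that $\Gamma_{s,t}=\Gamma_t-\Gamma_s$ is additive in its time variable, i.e. $\Gamma_{s,t}(x)=\Gamma_{s,u}(x)+\Gamma_{u,t}(x)$ for every fixed $x$. Substituting this into the defect, the terms carrying the frozen point $y_s$ telescope and leave
\begin{equation*}
\delta_u\varXi_{s,t}=\Gamma_{s,t}(y_s)-\Gamma_{s,u}(y_s)-\Gamma_{u,t}(y_u)=\Gamma_{u,t}(y_s)-\Gamma_{u,t}(y_u).
\end{equation*}
Now I would invoke the spatial Lipschitz bound (the second inequality in \eqref{eq: Gamma assumption}) together with the $\eta$-H\"older continuity of $y$ to estimate
\begin{equation*}
|\delta_u\varXi_{s,t}|\lesssim \|\Gamma\|_{\cC^\gamma_T\cC^1}\,|y_s-y_u|\,|t-u|^\gamma\leq \|\Gamma\|_{\cC^\gamma_T\cC^1}\|y\|_{\cC^\eta}|t-s|^{\gamma+\eta},
\end{equation*}
using $|y_s-y_u|\leq\|y\|_{\cC^\eta}|s-u|^\eta$ and $|s-u|,|t-u|\leq|t-s|$. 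This is precisely the claimed bound $\|\delta\Gamma(y)\|_{\cC^{\gamma+\eta}}\lesssim\|\Gamma\|_{\cC^\gamma_T\cC^1}\|y\|_{\cC^\eta}$, and since $\gamma+\eta>1$ the exponent exceeds $1$, which is exactly the regularity required by the sewing lemma.

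With both hypotheses verified, the sewing lemma yields a unique $\cI\in\cC^\gamma_T$ with $\cI_{s,t}=\cI_t-\cI_s$, realised as the limit $\cI_t=\lim_{|\cP|\to 0}\cI_\cP$ of the sums $\cI_\cP=\sum_{[u,v]\in\cP}\varXi_{u,v}$, together with the error estimate
\begin{equation*}
|\cI_{s,t}-\varXi_{s,t}|\lesssim \|\Gamma\|_{\cC^\gamma_T\cC^1}\|y\|_{\cC^\eta}|t-s|^{\gamma+\eta}.
\end{equation*}
Recalling $\cI_{s,t}=\int_s^t\Gamma_{\dd r}(y_r)$ and $\varXi_{s,t}=\Gamma_{s,t}(y_s)$, this is exactly \eqref{eq: inc of int ineq}, completing the proof. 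As this is a textbook application of sewing, I do not anticipate a true obstacle; the single point needing care is the additivity $\Gamma_{s,t}=\Gamma_{s,u}+\Gamma_{u,t}$ in the telescoping step, which is what makes the frozen-point terms cancel and is the only place the specific structure of the germ $\varXi$ enters.
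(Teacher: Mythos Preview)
Your proposal is correct and follows essentially the same route as the paper: you apply the deterministic sewing lemma to the germ $\varXi_{s,t}=\Gamma_{s,t}(y_s)$, use the first bound in \eqref{eq: Gamma assumption} for the H\"older estimate on $\varXi$, perform the same telescoping identity $\delta_u\varXi_{s,t}=\Gamma_{u,t}(y_s)-\Gamma_{u,t}(y_u)$ via additivity of $\Gamma$ in time, and then combine the spatial Lipschitz bound with the $\eta$-H\"older regularity of $y$ to obtain the $|t-s|^{\gamma+\eta}$ control on the defect. The paper's proof is the same argument, only slightly more terse.
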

\begin{proof}
To prove this, we make use of the classical sewing lemma from the theory of rough paths (see \cite[Lemma 4.2]{Friz2014}). Set $\varXi_{s,t}=\Gamma_{s,t}(y_s)$. Then we know from the sewing lemma that the abstract Riemann sum in \eqref{eq: abstract R sum} converge and \eqref{eq: inc of int ineq} holds  if there exists a $\beta>1$, such that
\begin{equation}\label{eq: delta varxi bound}
   |\varXi_{s,t}|\lesssim |t-s|^\gamma \qquad  |\delta_u \varXi_{s,t}|\lesssim |t-s|^\beta,
\end{equation}
where $\delta_u\varXi_{s,t}=\varXi_{s,t}-\varXi_{s,u}-\varXi_{u,t}$.  It follows by elementary algebraic manipulations that
\begin{equation}\label{eq: algebraic manipulation}
    \delta_u\varXi_{s,t}=\Gamma_{u,t}(y_s)-\Gamma_{u,t}(y_u),
\end{equation}
and thus invoking the assumption in \eqref{eq: Gamma assumption}, we obtain that
\begin{equation*}
    |\delta_u\varXi_{s,t}|\lesssim |t-u|^\gamma|y_s-y_u|\lesssim |t-s|^{\gamma+\eta}.
\end{equation*}
Since $\Gamma\in \cC^\gamma_T$, and due to the assumption that $\eta+\gamma>1$, it follows that \eqref{eq: delta varxi bound} is satisfied, and thus our claim follows from  the sewing lemma (\cite[Lemma 4.2]{Friz2014}).
\end{proof}
\begin{rem}
Of course, the non-linear Young integral coincides with the classical Young integral if the abstract integrand $\Gamma_{s,t}(y_s)$ is for example given by  $\Gamma_{s,t}(y_s)=y_s X_{s,t}$, for some $\gamma$-H\"older continuous path $x$. Furthermore, if $b$ is a measurable function, and $z$ is path of finite $p$-variation, then set $\Gamma_{s,t}(y_s)=\int_s^tb(y_s+z_r)\dd r$. In this case it is readily checked that the integral coincides with the classical Riemann integral
\begin{equation*}
    \int_0^t b(y_r+z_r)\dd r=\int_0^t \Gamma_{\dd r}(y_r).
\end{equation*}
See \cite{galeati2020noiseless} for a comprehensive introduction and discussion of the non-linear integral.
\end{rem}

For self-containedness we include a proof of Lemma \ref{lem: abstract young equations}. The existence and uniqueness of these equations has been proven in \cite{Catellier2016,HarangPerkowski2020}, and we refer to these references for a full account on these results.

\begin{proof}[Proof of Lemma \ref{lem: abstract young equations}]
This proof follow along the lines of \cite[Lemma 30]{HarangPerkowski2020}, and thus we only give here a shorter recollection of the most important details. Let $\beta\in (\frac{1}{\kappa},\gamma)$ where we recall that $\kappa>\frac{1}{\gamma}$ by assumption,   and let $\fS_T:\cC^\beta_T(\RR^d)\rightarrow \cC^\beta_T(\RR^d)$ be the solution map given  by
\begin{equation*}
    \fS_T(y):=\bigg\{\xi+\int_0^t\Gamma_{\dd r}(y_r)\big|\, t\in [0,T]\bigg\}.
\end{equation*}
Let $\scB_{T}(\xi)\subset \cC^\beta_T(\RR^d)$ be a unit ball centered at $\xi\in\RR^d$. In order to prove existence and uniqueness of \eqref{eq: general ODE}, we will begin to show that there exists a $\tau>0$ such that $\fS_\tau$ leaves the unit ball $\scB_\tau(\xi)$ invariant. In the second step we will show that there exists a $\tau^\prime>0$ such that the solution map $\fS_{\tau^\prime}$ is a contraction on the unit ball $\scB_{\tau^\prime}(\xi)$. It then follows by Picard's fixed point theorem that a unique solution  exists in the unit ball $\scB_{\bar{\tau}}(\xi)$ for $\bar{\tau}=\tau\wedge \tau^\prime$. In the end, since $\xi\mapsto \Gamma(\xi)$ is globally bounded, we can iterate the solution to the intervals $[k\bar{\tau},(k+1)\bar{\tau}\wedge T]$ for $k\in \NN$.
\\

\noindent
We begin to show the invariance. By application of \eqref{eq: inc of int ineq} and {\rm (i)} in \eqref{eq: conditions for ex and uni}, it follows that for $y\in \scB_\tau(\xi)$
\begin{equation*}
    \|\fS_\tau(y)\|_{\cC^\beta_\tau} \lesssim \|\Gamma\|_{\cC^\gamma_\tau L^\infty}\tau^{\gamma-\beta}+\|\Gamma\|_{\cC^\gamma_\tau\cC^{\kappa}}\|y\|_{\cC^\beta_\tau}\tau^{\gamma}.
\end{equation*}
Using that $y\in \scB_\tau(\xi)$, and thus in particular $\|y\|_{\cC^\beta_\tau}\leq 1$, it follows that
\begin{equation*}
     \|\fS_\tau(y)\|_{\cC^\beta_\tau} \lesssim \|\Gamma\|_{\cC^\gamma_\tau\cC^{\kappa}} \tau^{\gamma-\beta}.
\end{equation*}
By choosing $\tau>0$ sufficiently small, we obtain $\|\fS_\tau(y)\|_{\cC^\beta_\tau} \leq 1$, and thus $\fS_\tau$ leaves the unit ball $\scB_\tau(\xi)$ invariant.
We continue to prove the contraction property. Applying \eqref{eq: inc of int ineq} it follows from Lemma \ref{lem: non linear young integral}  that for $y,z\in \scB_{\tau^\prime}(\xi)$ we have
\begin{equation}\label{eq: contr ineq}
\begin{aligned}
    \|\fS_{\tau^\prime}(y)-\fS_{\tau^\prime}(z)\|_{\cC^\beta_{\tau^\prime}} &\lesssim \|\Gamma(y)-\Gamma(z)\|_{\cC^\beta_{\tau^\prime}}+ \|\int_0^\cdot( \Gamma_{\dd r}(y)-\Gamma_{\dd r}(z))-(\Gamma(y)-\Gamma(z))\|_{\cC^\beta_{\tau^\prime}}
    \\
    &\lesssim
    \|\Gamma(y)-\Gamma(z)\|_{\cC^\beta_{\tau^\prime}}+\|\delta [\Gamma(y)-\Gamma(z)]\|_{\cC^{\beta^\prime}}(\tau^\prime)^{\beta^\prime-\beta},
    \end{aligned}
\end{equation}
for some $\beta^\prime>1$.
We may assume that $z_0=y_0=\xi$.
For the first term on the right hand side, it follows by assumption \eqref{eq: Gamma assumption} {\rm (ii)} that
\begin{equation}\label{eq:first bound diff}
    \|\Gamma(y)-\Gamma(z)\|_{\cC^\beta_{\tau^\prime}}\lesssim_\Gamma (\tau^\prime)^{\gamma-\beta}\|y-z\|_{\cC^\beta_{\tau^\prime}},
\end{equation}
where we have used that $y_0-z_0=0$. For the second term on the right hand side of \eqref{eq: contr ineq}  we appeal to the proof of the non-linear Young integral in Lemma \ref{lem: non linear young integral}, we will need to show that the action of the $\delta$-operator on the integrand $\varXi_{s,t}:=\Gamma_{s,t}(y_s)-\Gamma_{s,t}(z_s)$ is sufficiently regular and has a contractive property. That is,  we will prove that for $(s,u,t)\in \Delta^3_{\tau^\prime}$, the following inequality holds  $|\delta_u\varXi_{s,t}|\lesssim |t-s|^\mu \|y-z\|_{\cC^\beta}$.
By the fundamental theorem of calculus, it follows that
\begin{equation*}
    \varXi_{s,t}=\int_0^1 \nabla \Gamma_{s,t}(\rho y_s + (1-\rho)z_s)d\rho (y_s-z_s).
\end{equation*}
By the same algebraic manipulations as used in \eqref{eq: algebraic manipulation}, it is readily checked that for $(s,u,t)\in \Delta^3_T$ we have
\begin{equation*}
    \delta_u \varXi_{s,t}=\int_0^1  \nabla \Gamma_{u,t}(\rho y_s + (1-\rho)z_s) (y_s-z_s)-\nabla \Gamma_{u,t}(\rho y_u + (1-\rho)z_u)d\rho (y_u-z_u)d\rho.
\end{equation*}
By addition and subtraction of $\nabla \Gamma_{u,t}(\rho y_s + (1-\rho)z_s) (y_u-z_u)$ inside the above integral, invoking {\rm (i)} of  \eqref{eq: conditions for ex and uni} and using that $\kappa\geq 1$, we begin to observe that
\begin{equation}\label{eq:p1}
    |\nabla \Gamma_{u,t}(\rho y_s + (1-\rho)z_s)[ (y_s-z_s)- (y_u-z_u)]|\lesssim \| \Gamma\|_{\cC^\gamma_T\cC^{\kappa}}\|y-z\|_{\cC^\beta_{\tau^\prime}}|t-s|^{\gamma+\beta}.
\end{equation}
Furthermore, invoking {\rm (iii)} of \eqref{eq: conditions for ex and uni}, it follows that
\begin{multline}\label{eq:p2}
    |[\nabla \Gamma_{u,t}(\rho y_s + (1-\rho)z_s)-\nabla \Gamma_{u,t}(\rho y_u + (1-\rho)z_u)](y_u-z_u)|
    \\
    \lesssim [ \|y\|_{\cC^\beta_{\tau^\prime}}\vee\|z\|_{\cC^\beta_{\tau^\prime}} ]^{\kappa-1} \|\Gamma\|_{\cC^\gamma_{T}\cC^\kappa}|t-u|^\gamma |u-s|^{\beta(\kappa-1)}(|y_0-z_0|+\|y-z\|_{\cC^\beta_{\tau^\prime}}).
\end{multline}
Due to the assumption that $\beta\in (\frac{1}{\kappa},\gamma)$ it follows that $\beta(\kappa-1)+\gamma>1$. Combining \eqref{eq:p1} and \eqref{eq:p2}, it follows that for $y,z\in \scB_{\tau^\prime}(\xi)$ with $z_0=y_0=\xi$ we set $\beta^\prime=\beta(\kappa-1)+\gamma$ and we have
\begin{equation}\label{eq:second bound diff}
    \|\delta_u \varXi_{s,t}\|_{\cC^{\beta^\prime}_{\tau^\prime}}\lesssim_{\Gamma} \|y-z\|_{\cC^\beta_{\tau^\prime}}.
\end{equation}
Thus inserting \eqref{eq:first bound diff} and \eqref{eq:second bound diff} into the right hand side of \eqref{eq: contr ineq}, we obtain the inequality
\begin{equation*}
    \|\fS_{\tau^\prime}(y)-\fS_{\tau^\prime}(z)\|_{\cC^\beta_{\tau^\prime}} \lesssim_\Gamma \|y-z\|_{\cC^\beta_{\tau^\prime}}(\tau^\prime)^{\gamma-\beta}.
\end{equation*}
By choosing $\tau^\prime>0$ small enough, it is clear that the solution map $\fS_{\tau^\prime}$ is a contraction on the ball $\scB_{\tau^\prime}(\xi)$. Note in particular that the contraction bound is independent on the initial data, due to the assumption of boundedness of the derivatives of $\Gamma$ (recall that $\cC^\gamma_\tau\simeq C^\gamma_b([0,\tau])$ when $\gamma\in (0,1)$).

It follows that $\fS_{\tau\wedge \tau^\prime}$ is a contraction and leaves the ball $\scB_{\tau^\prime}(\xi)$ invariant, and it follows by Picard's fixed point theorem that there exists a unique solution to \eqref{eq: general ODE} on in $\scB_{\tau^\prime}(\xi)$. By standard procedures, one can now iterate the solution to the whole interval $[0,T]$, and we ask the patient reader to consult \cite[Section 8.3]{Friz2014} for further details on this part.
At last we note that the solution is indeed contained in the space $\cC^\gamma_T$. Indeed, assume $y\in \cC^\beta_T$ satisfies \eqref{eq: general ODE}. Using the inequality in \eqref{eq: delta varxi bound} the following inequality holds
\begin{equation*}
    |y_{s,t}|=|\int_s^t \Gamma_{\dd r}(y_r)|\lesssim |\Gamma_{s,t}(y_s)|+\|\Gamma\|_{\cC^\gamma_T\cC^\kappa}\|y\|_{\cC^\beta_T}|t-s|^{\gamma+\beta} \lesssim_{y,\Gamma,T} |t-s|^\gamma,
\end{equation*}
and it follows that $y\in \cC^\gamma_T$. This concludes our proof.

\end{proof}

\bibliographystyle{plain}
\bibliography{all}

\end{document}